\documentclass[11pt, reqno]{amsart}
\usepackage[dvipsnames]{xcolor}
\usepackage{mathtools}
\mathtoolsset{showonlyrefs}
\usepackage{etoolbox}

\usepackage{blindtext}
\usepackage{latexsym}
\usepackage[pagewise]{lineno}
\usepackage[utf8]{inputenc}
\usepackage{exscale}
\usepackage{amsmath}
\usepackage{amssymb}
\usepackage{amsfonts}
\usepackage{mathrsfs}
\usepackage{amsbsy}
\usepackage{relsize}

\definecolor{mypink1}{rgb}{0.858, 0.188, 0.478}
\definecolor{mypink2}{RGB}{219, 48, 122}
\definecolor{mypink3}{cmyk}{0, 0.7808, 0.4429, 0.1412}
\definecolor{mygray}{gray}{0.6}
\definecolor{venetianred}{rgb}{0.78, 0.03, 0.08}
\definecolor{sapphire}{rgb}{0.03, 0.15, 0.4}
\definecolor{utahcrimson}{rgb}{0.83, 0.0, 0.25}
\definecolor{trueblue}{rgb}{0.0, 0.45, 0.81}
\definecolor{carminered}{rgb}{1.0, 0.0, 0.22}
\definecolor{cobalt}{rgb}{0.0, 0.28, 0.67}
\definecolor{cornflowerblue}{rgb}{0.39, 0.58, 0.93}

\usepackage{hyperref}
\hypersetup{
setpagesize=false,
 bookmarksnumbered=true,%
 bookmarksopen=true,%
 colorlinks=true,%
 linkcolor=utahcrimson,
 citecolor=blue
}
\usepackage{comment}
\PassOptionsToPackage{reqno}{amsmath}
\usepackage{esint} 
\usepackage{amssymb} 
\usepackage{stmaryrd}
\usepackage{cite} 

\usepackage{tikz}

\textheight 21.5cm \textwidth 14.5cm
 \voffset=0.5cm
\oddsidemargin=0.5cm \evensidemargin=0.5cm \topmargin=0.5cm

\usepackage{mathtools}
\usepackage{soul}
\usepackage{color}

\usepackage{slashed}



\newcommand{\R}{\ensuremath{\mathbb{R}}}

\newcommand{\N}{\ensuremath{\mathbb{N}}}
\newcommand{\bm}{\mathbf{m}}
\newcommand{\bX}{\mathbf{X}}

\newcommand{\mC}{\mathcal{C}}
\newtheorem{thm}{Theorem}[section]

\newtheorem{Lem}[thm]{Lemma}
\newtheorem{Prop}[thm]{Proposition}
\newtheorem{Def}{Definition}[section]
\newtheorem{Rem}{Remark}[section]
\newtheorem{rems}{Remarks}[section]




\title[Inhomogeneous semilinear wave equation]{Well-posedness and linearization for a semilinear wave equation  with spatially growing nonlinearity}
\author[D. Draouil, M. Majdoub]{Dhouha Draouil and  Mohamed Majdoub}
\address[D. Draouil]{Universit\'e de Tunis El Manar, Facult\'e des Sciences de Tunis, D\'epartement de Math\'ematiques, Laboratoire \'equations aux d\'eriv\'ees partielles (LR03ES04), 2092 Tunis, Tunisie.}
\address[M. Majdoub]{Department of Mathematics, College of Science, Imam Abdulrahman Bin Faisal University, P. O. Box 1982, Dammam, Saudi Arabia.\newline Basic and Applied Scientific Research Center, Imam Abdulrahman Bin Faisal University, P.O. Box 1982, 31441, Dammam, Saudi Arabia.}
\email{\sl \textcolor{blue}{douhadraouil@yahoo.fr}}
\email{\sl \textcolor{blue}{mmajdoub@iau.edu.sa}}
\email{\sl \textcolor{blue}{med.majdoub@gmail.com}}

\begin{document}

\medskip
\begin{abstract}
{We study the initial value problem for a defocusing semi-linear wave equation with spatially growing nonlinearity. By employing Moser-Trudinger-type inequalities and Strichartz estimates, we establish global well-posedness in the energy space for radially symmetric initial data. Furthermore, we derive the linearization of energy-bounded solutions using the methodology introduced in \cite{P.GR}. The main challenge in our analysis arises from the spatial growth of the nonlinearity at infinity, which prevents the direct application of Sobolev embeddings or Hardy inequalities to control the potential energy. The main novelty of this work lies in overcoming this challenge within the radial framework through the combined application of the Strauss inequality and Strichartz estimates.}
\end{abstract}
\subjclass{35L05, 35A01}
\keywords{Semilinear wave equation, energy estimate, Strichartz estimate, Moser-Trudinger inequality, well-posedness, linearization}

\maketitle

\section{Introduction and main results}
\label{S1}

This paper is concerned with the Cauchy problem associated to the following semi-linear
 wave equation
\begin{equation}
\label{eq a1}\partial_{t}^{2}u-\Delta_{x}u+\mathbf{m}\, u =|x|^bf(u)
,\end{equation}
where
$u:= u(t,x)$ is a real-valued function of
$(t,x)\in \mathbb{R}\times {\mathbb{R}}^{N}$, $\mathbf{m}\geq 0$, $b\in\R$, and {$f: \R\to\R$ is a continuous function.}
We assume that the initial data 
\begin{equation}
\label{eq aC.D}
(u_0,u_1):=(u(0),\partial_{t}u(0))
\end{equation}
 are radially symmetric and belong to the energy space $H^1(\R^N)
\times L^{2}(\mathbb{R}^N)$.

Equation  \eqref{eq a1} has the conserved energy
\begin{equation}
\label{energy}
\begin{split}
E(u,t)&=\frac{1}{2}\left(\|\partial_{t}u(t)\|_{L^2}^2+\|\nabla\,u(t)\|_{L^2}^2+\bm \|u(t)\|_{L^2}^2\right)\\&-\int_{\R^N}\,|x|^b\,F(u(t,x))\,dx,
\end{split}
\end{equation}
where $F(u)=\int_0^u\,f(v)\,dv.$ 

The nonlinear wave equation is a fundamental equation in mathematical physics that describes the behavior of waves in certain physical systems. It has been the subject of extensive study and research in the past several decades.

{For} this equation, the term {\it defocusing} refers to the behavior of the waves, indicating that they tend to spread out and disperse rather than concentrating or focusing at specific points. This is in contrast to the {\it focusing} case, where waves tend to concentrate and exhibit a more localized behavior.

The equation is classified as a semi-linear wave equation when it involves a linear wave operator combined with a non-linear term. The linear wave operator captures the wave propagation behavior, while the non-linear term introduces non-linear interactions that can arise in various physical phenomena.

Before proceeding further, let us recall some historical context regarding this problem. We start by considering the defocusing semi-linear wave equation in $N\geq 3$ spatial dimensions, expressed as
\begin{equation}
\label{eq a2}\partial_{t}^{2}u-\Delta_{x}u+ |u|^{p-1}u=0,\quad p>1.\end{equation}

The well-posedness of \eqref{eq a2} within the Sobolev spaces $H^s$ has been extensively explored, as documented in various works such as \cite{GI.SO.VE, GI.VE 1, M.GR 2, L.V.K 2, SH.ST 1, SH.ST 2}. It is established that the Cauchy problem associated with \eqref{eq a2} is locally well-posed in the standard Sobolev space $H^s({\mathbb{R}^N})$ under the condition $s > \frac{N}{2}$, or when $\frac{1}{2} \leq s < \frac{N}{2}$ and $p \leq 1 + \frac{4}{N-2s}$. Furthermore, if $p = 1 + \frac{4}{N-2s}$ and $\frac{1}{2} \leq s < \frac{N}{2}$, there are global $H^s$ solutions for small Cauchy data \cite{NA.OZ}.

The global solvability within the energy space has a rich historical background. A pivotal value of the {power} $p$ emerges, denoted as $2_N^* := \frac{N+2}{N-2}$, leading to three main scenarios. In the subcritical case $(p < 2_N^*)$, Ginibre and Velo established in \cite{GI.VE 1} the global existence and uniqueness in the energy space.

For the critical case $(p = 2_N^*)$, global existence was initially obtained by Struwe in the radially symmetric setting \cite{M.STR 1}, followed by Grillakis \cite{M.GR 1} in the general case, and subsequently by Shatah-Struwe \cite{SH.ST 2} in various dimensions.

Regarding the supercritical case $(p > 2_N^*)$, the question remains unresolved except for some partial results (refer to \cite{G.LE, G.LEB}). {Recently, it was shown in \cite{Colombo} that \eqref{eq a2} is globally well-posed for large data in three-dimensional space within a slightly supercritical regime, specifically for $ 5 \leq p < 5 + \delta_0 $, where $ \delta_0 > 0 $.}

In two spatial dimensions, any polynomial nonlinearity is regarded as subcritical with respect to the $H^1$ norm. Therefore, it is reasonable to explore an exponential nonlinearity. Notably, Nakamura and Ozawa \cite{NA.OZ} established global well-posedness and scattering for small Cauchy data in any spatial dimension $N \geq 2$. Subsequently, {Atallah Baraket} \cite{A} demonstrated a local existence result for the equation
\begin{equation}
\label{eq a3}
  \partial_{t}^{2}u - \Delta_{x}u + u{\rm e}^{\alpha u^{2}} = 0,\tag{$E_{\alpha}$}
\end{equation}
for $0 < \alpha < 4\pi$ and radially symmetric initial data $(0, u_1)$ with compact support.
{In the subsequent works \cite{IB. MAJ. MAS 2, Duke}, global well-posedness and the asymptotic behavior in the energy space were successfully achieved for both subcritical and critical regimes.} Recall that the Cauchy problem $(E_{4\pi})$-\eqref{eq aC.D} is said to be subcritical if
$$E^0:=\|u_1\|_{L^2(\R^2)}^2+\|\nabla u_0\|_{L^2(\R^2)}^2+\frac{1}{4\pi}\int_{\R^2}\Big({\rm e}^{4\pi u_0^2}-1\Big) dx <1.$$
It is critical if $E^0=1$ and supercritical if $E^0>1$.
{It is noteworthy to emphasize that the supercritical case was thoroughly investigated in \cite{IB.MAJ.MAS 3, APDE-2011}. 
For nonlinearities with weaker growth than ${\rm e}^{u^2}$, well-posedness, stability, and blow-up investigations were carried out in \cite{JPDE, Tar1} and \cite{Tar2, Tar3}.}\\

{We now return to the equation \eqref{eq a1}. For the case where $ b \in (-1, 0) $, $ N = 2 $, and $ f(u) = \pm|u|^{p-1}u $ or $ f(u) = \pm u\left({\rm e}^{\alpha |u|^2} - 1\right) $ with $ p > 1 $ and $ 0 < \alpha < 4\pi(1 + b) $, this problem was previously investigated in \cite{INW2015}. Furthermore, in \cite{Georgiev1}, the global existence and blow-up behavior of solutions to \eqref{eq a1} with focusing nonlinearity $ f(u) = |u|^{p-1}u $ and $ -2 < b < 0 $ were examined in both subcritical and critical regimes. For further investigations in the focusing regime, particularly those involving a general singular weight $ V(x) $, we refer the reader to \cite{Georgiev2}. }

{We note that equation \eqref{eq a1} can be interpreted as the relativistic counterpart of the inhomogeneous Schr\"odinger equation:
\begin{equation}
    \label{INLS}
    {\rm i}\partial_{t} u+\Delta_{x}u=|x|^bf(u).
\end{equation}
In the framework of quantum mechanics, the Klein-Gordon equation is regarded as the relativistic version of the Schr\"odinger equation, as it extends the principles of quantum mechanics to incorporate relativistic effects. While the Schr\"odinger equation is confined to non-relativistic systems, the Klein-Gordon equation offers a consistent description of relativistic quantum systems, particularly for spin$-0$ particles. For a more comprehensive insights of this topic, see \cite{Quantum, Quantum1}.\\
Equation \eqref{INLS} models the propagation of laser beams in specific plasma media and nonlinear optical systems. For further details, we refer the reader to \cite{LT, Gi} and the references therein.}

{The primary challenge in analyzing \eqref{eq a1} arises from the spatial growth of $|x|^b$ at infinity for $b>0$, which hinders the straightforward application of Sobolev embedding or Hardy inequality to control the potential energy for energy-solutions. The key innovation of this work lies in addressing this issue within the radial framework by making use of the Strauss inequality in combination with Strichartz estimates.}

 We are interested in strong solutions of \eqref{eq a1} for $b>0$ in dimensions 
 $N=2$ and $N=3$.  Specifically, for the three-dimensional we assume $\bm=0$, with the nonlinearity $f$ given by
 \begin{equation}
 \label{3D}
 f(u)=-|u|^{p-1}u\;\;(p>1).
 \end{equation}
In the two-dimensional case, we assume $\bm=1$, and  
\begin{equation}
\label{2D}
f(u)=-\left({\rm e}^u-1-u\right).
 \end{equation}
{Note that the energy defined in \eqref{energy} takes the form  
\begin{equation*}
E(u,t) = \frac{1}{2} \left( \|\partial_{t} u(t)\|_{L^2}^2 + \|\nabla u(t)\|_{L^2}^2 \right) + \frac{1}{p+1} \int |x|^b |u(t,x)|^{p+1} \, dx,
\end{equation*}
when the nonlinearity is given as in \eqref{3D}. In contrast, for the nonlinearity defined in \eqref{2D}, the energy becomes  
\begin{equation*}
\begin{split}
E(u,t) &= \frac{1}{2} \left( \|\partial_{t} u(t)\|_{L^2}^2 + \|\nabla u(t)\|_{L^2}^2 + \|u(t)\|_{L^2}^2 \right)\\& + \int |x|^b \left( e^{u(t,x)} - 1 - u(t,x) - \frac{u^2(t,x)}{2} \right) dx.
\end{split}
\end{equation*}}
 
Equation \eqref{eq a1} with $f$ as specified in \eqref{3D} exhibits a scaling invariance. Specifically, if $u$ solves \eqref{eq a1}, then $u_\lambda$ also does, where
	$$
	u_\lambda(t,x):= \lambda^{\frac{2+b}{p-1}} u(\lambda t, \lambda x), \quad \lambda>0.
	$$
	A straightforward calculation yields
	$$
	\|u_\lambda(0)\|_{\dot{H}^s} = \lambda^{s+\frac{2+b}{p-1}-\frac{N}{2}} \|u_0\|_{\dot{H}^s}
	$$
	indicating that the critical regularity space is $\dot{H}^{s_c}$, where $s_c:= \frac{N}{2}-\frac{2+b}{p-1}.$
	{The case $s_c=1$ corresponds to the energy-critical nonlinearity $p=\frac{N+2+2b}{N-2}$.}

Our primary objective is to establish global well-posedness in the energy space, without imposing any restrictions on the size of the Cauchy data, in the defocusing case. In two dimensions, a key advantage arises from the fact that the nonlinearity is comparatively weaker than ${\rm e}^{\alpha |u|^2}$ for any $\alpha>0$. As a result, the energy estimate and a set of technical tools are adequate to complete the proof.

However, in dimensions three (or higher), we will need to assume that the nonlinearity is sub-critical in a specific manner, which will be precisely defined later. This assumption is {somehow technical} to address the heightened complexity arising from the nonlinearity.\\

{In the second part of this work, our aim is to explore the concept of {\it linearizability} for \eqref{eq a1} by showing, under appropriate assumptions, that the nonlinear term $f(u)$ does not induce extra oscillations or energy concentrations. For a rigorous formulation of this concept, refer to Definition \ref{add} below.}

The study of linearization primarily originated from the investigation of equations featuring a polynomial-type nonlinearity in three spatial dimensions, as exemplified by equation \eqref{eq a2}. Key references in this field include \cite{BH.GR,BMM, P.GR, OMTS, M.MAJ}. These studies have made substantial contributions to our comprehension of linearization techniques and have set the groundwork for subsequent advancements in the field.

{To provide a concrete example of the concept of {\it linearizability}, we examine the linear wave equation associated with \eqref{eq a2}:}
\begin{equation}\partial_{t}^{2}u-\Delta_{x}u=0.\label{eq a14}\end{equation}
Let $(\varphi_n,\psi_n)$ be a bounded sequence in $H^1({\mathbb{R}}^3)\times L^2({\mathbb{R}}^3)$ such that
\begin{equation}
(\varphi_n,\psi_n)\rightharpoonup (0,0)\quad\hbox{in}\quad  H^1({\mathbb{R}}^3)\times L^2({\mathbb{R}}^3),\label{eq a15}
\end{equation}
\begin{equation}
\exists\; {R_0}>0,\quad\forall\; n,\quad \hbox{supp}(\varphi_n)\cup\hbox{supp}(\psi_n)\subset \{x\in {\mathbb{R}}^{3},\quad |x|\leq{R_0}\}.\label{eq a16}
\end{equation}
We denote by $(u_n)$, (respectively $(v_n)$) the sequence of
finite energy
{solutions} to \eqref{eq a2}), (respectively \eqref{eq a14}) satisfying
$$(u_n(0),\partial_t{u_n}(0))=(v_n(0), \partial_t{v_n}(0))=(\varphi_n,\psi_n).$$
\begin{Def}\label{add}
Let $T$ a positive real number. We say that the sequence $(u_n)$
is linearizable on $[0,T]$ if
\begin{equation}
\sup_{t\in[0,T]}\Big(\|\partial_{t}(u_n-v_n)(t)\|_{L^2}^2+\|\nabla(u_n-v_n)(t)\|_{L^2}^2\Big)\longrightarrow 0\quad\mbox{as}\quad
n\rightarrow\infty.
\end{equation}
\end{Def}
In the work by G\'erard \cite{P.GR}, it was demonstrated that in the subcritical case ($p<5$), the sequence $(u_n)$ exhibits linearizability within every time bounded interval. However, in the critical case ($p=5$), the behavior of the solution can manifest nonlinearity, primarily attributable to the lack of compactness in the Sobolev embedding ${\dot{H}}^1({\mathbb{R}}^3)\hookrightarrow L^{6}({\mathbb{R}}^3)$.
In the aforementioned case, G\'erard established the linearization under the following necessary and sufficient condition:
\begin{equation*}
\|v_n\|_{L^\infty([0,T],L^{6}({\mathbb{R}}^3))}\longrightarrow
0\quad\mbox{as}\quad n\rightarrow\infty.
\end{equation*}
A similar result was proved by {Ibrahim and Majdoub} in
\cite{IB.MA 2} for the equation
\begin{equation*}
\partial_{t}^{2}u-\mbox{div}\left(A(x)\nabla_{x}u\right) +|u|^{p-1}u=0\quad
\quad \mbox{in}\quad {\mathbb{R}}_{t}\times{\mathbb{R}}_{x}^{N},
\end{equation*}
where $N\geq 3,1<p\leq 2_N^*$ and {$A$ is a regular function with values in the space of positive definite matrices of size $N\times N$}, which is the identity outside a compact set of $\mathbb{R}^N$. We also refer to \cite{M.MAJ} for more general nonlinearities.\\

\subsection{Main results}
Our first results consist of the following well-posedness theorems, which are established through the application of a classical fixed-point argument.
\begin{thm}\label{main2D} Let $N=2$, $0\leq b\leq 1/2$, {$\bm=1$}, and suppose that the nonlinearity $f$ is given by \eqref{2D}. Then, for any radially symmetric data $(u_0,u_1)\in H^{1}\times L^{2}$, the
initial value problem \eqref{eq a1}-\eqref{eq aC.D} has a unique global solution $u$ in
the space
${\mathcal C}(\R,H^{1})\cap
{\mathcal C}^{1}(\R,L^{2}).$
Moreover, $u$ satisfies the energy identity
\begin{equation}\label{eq a11}
 E(u,t)=E(u,0), {\quad\mbox{\rm for all}\,\,t\in\R}.
\end{equation}
\end{thm}
\begin{rems}
~{\rm
 \begin{itemize}
 \item[(i)] {The choice of a spherically symmetric framework is dictated by the presence of the radial weight $|x|^b$. Removing this assumption remains a challenging and open question.} 
 \item[(ii)] {The assumption $0 \leq b \leq \frac{1}{2}$ is purely technical and stems from the limitations of our approach. We believe that this restriction can be relaxed.}
\item[(iii)] In \cite{Struwe}, the well-posedness for arbitrary smooth initial data was shown specifically for the case $b=0$
and for critical exponential nonlinearities. We conjecture that, for smooth radially symmetric initial data, global well-posedness holds provided that $0\leq b\leq 1/2$.
\item[(iv)] {In the two-dimensional case, we focus on the Klein-Gordon equation (corresponding to $\bm = 1$), to avoid the use of the homogeneous Sobolev space $\dot{H}^1(\R^2)=\dot{W}^{1,2}(\R^2)$, known for showing specific anomalous properties. Notably, $\dot{H}^1(\R^2)$ is not a subset of $\mathcal{D}'(\R^2)$, the space of distributions on $\R^2$. See, for example, \cite[Remarque 4.1, p. 319]{Lions} and \cite[Proposition 1.34, p. 26]{BCD}. For a comprehensive discussion on the characterization and properties of homogeneous Sobolev spaces, we refer to \cite{Brasco} and the references provided therein.}
\end{itemize}}
\end{rems}

\begin{thm}\label{main3D} Let $N= 3$, $b>0$, $1+b\leq p<4+b$, {$\bm=0$}, and suppose that the nonlinearity $f$ is given by \eqref{3D}. Then, for any radially symmetric data $(u_0,u_1)\in H^{1}\times L^{2}$, the
initial value problem \eqref{eq a1}-\eqref{eq aC.D} has a unique global solution $u\in {\mathcal C}(\R,H^{1})\cap
{\mathcal C}^{1}(\R,L^{2}).$ 
Moreover, $u$ satisfies the energy identity \eqref{eq a11}.
\end{thm}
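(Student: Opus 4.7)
The plan is to establish local well-posedness in the energy space $\mathcal{C}([0,T],H^{1})\cap \mathcal{C}^{1}([0,T],L^{2})$ by a contraction mapping argument based on Strichartz estimates for the three-dimensional wave equation, and then extend the solution to all of $\R$ via energy conservation and the standard continuation principle. A preliminary step is to check that the potential energy $\int_{\R^{3}}|x|^{b}|u|^{p+1}\,dx$ is finite for every radial $u\in H^{1}(\R^{3})$ under the hypothesis $p\geq 1+b$. Splitting the integral at $|x|=1$, the outer part is controlled using Strauss's radial decay $|u(x)|\lesssim |x|^{-1}\|u\|_{H^{1}}$,
\[
\int_{|x|\geq 1}|x|^{b}|u|^{p+1}\,dx \;\lesssim\; \|u\|_{H^{1}}^{p-1}\int_{|x|\geq 1}|x|^{b-(p-1)}|u|^{2}\,dx \;\lesssim\; \|u\|_{H^{1}}^{p+1},
\]
where the weight is bounded on $|x|\geq 1$ precisely when $p\geq 1+b$, which clarifies the role of the lower threshold. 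The inner region is handled by (possibly weighted) Sobolev embedding in the energy-subcritical regime $p<4+b$.

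For local existence I would reformulate the problem via Duhamel's formula, writing the solution as the free evolution plus
\[
-\int_{0}^{t}\frac{\sin((t-s)|\nabla|)}{|\nabla|}\bigl(|x|^{b}|u|^{p-1}u\bigr)(s)\,ds,
\]
and seek a fixed point of the associated map in a closed ball of $L^{\infty}_{T}H^{1}\cap L^{q}_{T}L^{r}_{x}$ for a wave-admissible Strichartz pair $(q,r)$ adapted to the exponent $p$. The technical core is the nonlinear estimate: bounding $|x|^{b}|u|^{p-1}u$ in a suitable dual Strichartz norm by a positive power of $T$ times a subcritical power of $\|u\|_{L^{\infty}_{T}H^{1}\cap L^{q}_{T}L^{r}_{x}}$. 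The spatially growing weight is treated by splitting $\R^{3}$ into $|x|\leq 1$ (where $|x|^{b}\leq 1$ and the estimate reduces to the classical unweighted, subcritical case) and $|x|\geq 1$ (where Strauss's lemma absorbs $|x|^{b}$ into a power of the Sobolev norm through the pointwise decay). Uniqueness and continuous dependence follow by routine arguments in the same functional framework, and the lifespan depends only on $\|(u_{0},u_{1})\|_{H^{1}\times L^{2}}$, which permits iteration.

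Globalization is provided by energy conservation. Since $f(u)=-|u|^{p-1}u$ is defocusing, the potential part of the energy is non-negative, whence
\[
\tfrac{1}{2}\|\nabla_{t,x}u(t)\|_{L^{2}}^{2}\;\leq\;E(u,0)
\]
uniformly on the maximal interval of existence. Combined with the local theory this precludes finite-time blow-up in $H^{1}\times L^{2}$ and yields a global solution. The identity \eqref{eq a11} for $H^{1}$ solutions is first established for smooth, compactly supported approximations of the data and then passed to the limit via the continuous dependence.

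The main obstacle is the nonlinear Strichartz estimate in the presence of the spatially growing weight $|x|^{b}$, which is not globally scale-invariant and therefore resists direct application of the standard Sobolev--Strichartz machinery. The strict subcriticality $p<4+b$ is what allows one to close the contraction by producing a positive power of $T$, while the lower bound $p\geq 1+b$ is required both for the potential energy to be well-defined on $H^{1}$ and for the long-range part of the nonlinear estimate to converge after invoking Strauss's lemma. The delicate interplay between radial Sobolev decay, Strichartz theory, and the two-sided exponent constraint is the crux of the proof.
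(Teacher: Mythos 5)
Your overall architecture coincides with the paper's: a contraction in $L^{\infty}_{T}H^{1}\cap L^{q}_{T}L^{r}$ built on Strichartz estimates, with the weight absorbed through radial symmetry, followed by globalization via energy conservation and the defocusing sign, and your outer-region computation (Strauss decay $|x|\,|u(x)|\lesssim\|u\|_{H^{1}}$ turning $|x|^{b}|u|^{p-1}$ into a bounded multiplier on $|x|\geq 1$ precisely when $p\geq 1+b$) is sound and correctly identifies the role of the lower threshold. The divergence, and the gap, is in the inner region $|x|\leq 1$. You propose to bound $|x|^{b}\leq 1$ there and invoke the ``classical unweighted, subcritical case.'' But the theorem allows every $b>0$ with $p<4+b$, so for $b>1$ the exponent $p$ may reach or exceed $5$, and the unweighted problem on the unit ball is then energy-critical or supercritical: no admissible Strichartz pair lets you place $|u|^{p-1}$ in a Lebesgue space controlled by $H^{1}$ (one would need $\|u\|_{L^{(p-1)\gamma}}$ with $(p-1)\gamma>6$, which radial symmetry does not rescue near the origin, since concentration at $x=0$ is exactly what Strauss decay fails to exclude). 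Discarding the weight in the inner region throws away the one mechanism that defeats this concentration, namely the vanishing of $|x|^{b}$ at the origin.

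The paper closes this step differently and globally: Lemma \ref{Interp-Est} estimates
\begin{equation}
\bigl\||x|^{b}|u|^{p-1}v\bigr\|_{L^{2}}\leq \bigl\||x|^{b}|u|^{p-1}\bigr\|_{L^{\gamma}}\|v\|_{L^{r}},\qquad \gamma=\tfrac{6}{p-1-b},\ r=\tfrac{6}{4+b-p},
\end{equation}
and then controls $\||x|^{b}|u|^{p-1}\|_{L^{\gamma}}$ by $\|u\|_{H^{1}}^{p-1}$ using the weighted radial Gagliardo--Nirenberg inequality \eqref{SGNI} with $\theta=b\gamma$, $\lambda=(p-1)\gamma$; the admissibility conditions of Lemma \ref{GNI} reduce exactly to $1+b\leq p\leq 5+2b$, so the weight is used on all of $\mathbb{R}^{3}$, not only at infinity. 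Your scheme works as written for $0<b\leq 1$ (where $p<5$), but to cover the full range of the theorem you must replace the unweighted Sobolev embedding in the inner region by this weighted radial inequality (or by an equivalent weighted estimate exploiting $|x|^{b}\to 0$ at the origin). Two smaller points: the same weighted inequality is also what you need to make ``(possibly weighted) Sobolev embedding'' precise when verifying $|x|^{b}|u|^{p+1}\in L^{1}$ for $p>5$; and note that the paper only runs the fixed-point argument for $3+b<p<4+b$, disposing of $1+b<p\leq 3+b$ by energy methods, a reduction you may also find convenient.
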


\vspace{1.5cm}

\begin{rems}
 ~{\rm
 \begin{itemize}
 \item[(i)] The condition $1 + b \leq p < 4 + b\,(\leq 5 + 2b)$ guarantees the well-definedness of the energy, specifically that {$|x|^b |u|^{p+1} \in L^1(\mathbb{R}^3)$} whenever {$u \in H^1(\mathbb{R}^3)$ is radially symmetric}. This follows directly from Lemma \ref{GNI} below.
     \item[(ii)] We will focus exclusively on the case $3+b<p<4+b$. In the range $1+b<p\leq 3+b$, the local well-posedness can be readily obtained by employing energy methods.
     \item[(iii)] {The restriction $ p < 4 + b $ arises as a technical constraint within our methodology. Nevertheless, we conjecture that global well-posedness in the energy space holds for the full range $ 1 + b < p \leq 5 + 2b $. }
 \end{itemize}}   
\end{rems}

We now turn to the {\it linearization} as introduced in Definition \ref{add}. We consider, in ${\mathbb{R}}_{t}\times{\mathbb{R}}_{x}^{N}$ ($N=2, 3$),
 the linear  equation
\begin{equation}\label{eq a210} \Box v+\bm\,v=0,
\end{equation}
where $\Box:=\partial_{t}^{2}-\Delta_{x}$. Let $(\varphi _n,\psi_n)$ be a sequence in $H^1
\times L^2$ such that
\begin{equation}
(\varphi _n,\psi_n)\rightharpoonup (0,0)\quad\hbox{in}\quad
H^1\times L^2,\label{eq a19}
\end{equation}
\begin{equation}
\exists\; R_0>0,\quad\forall\; n,\quad \hbox{supp}(\varphi
_n)\cup\hbox{supp}(\psi_n)\subset\{
\;| x|\leq R_0\; \}.\label{eq a20}
\end{equation}
{Our main result regarding {\it linearization} can be stated as follows.}
\begin{thm}\label{at2}
Assume $b>0$ and  that the nonlinearity $f$ satisfies either \eqref{2D} when $N=2$  with $ b\leq 1/2, \,{\bm=1}$ or \eqref{3D} when $N= 3$ with  $1+b<p<4+b,\, {\bm=0}$.
We denote by $(u_n),\:(\mbox{respectively }(v_n))$ the
sequence of finite energy {solutions} to \eqref{eq a1},
$(\mbox{respectively } \eqref{eq a210})$ such that
\begin{equation*}
(u_n(0),\partial_t{u_n}(0))=(v_n(0), \partial_t{v_n}(0))=(\varphi _n,\psi_n).
\end{equation*}
Then, for any $T>0$, we have
\begin{equation*}
\sup_{t\in[0,T]}E_0(u_n-v_n,t)\longrightarrow 0\quad\mbox{as}
\quad n\rightarrow\infty,\end{equation*}
where 
\begin{equation}
    \label{E-0}
    E_0(w,t)= \frac{1}{2}\Big(\|\partial_{t}w(t)\|_{L^2}^2+\|\nabla\,w(t)\|_{L^2}^2+\bm \|w(t)\|_{L^2}^2\Big).
\end{equation}
\end{thm}

\vspace{0.7cm}

\begin{rems}
~
{\rm \begin{itemize}
    \item[(i)] {A sequence $(u_n)$ of solutions to \eqref{eq a1} is termed {\it finite energy} or {\it energy-bounded} if $\displaystyle\sup_{n\in\N}E(u_n)<\infty$, where the energy $E$ is defined by \eqref{energy}.}
    \item[(ii)] The aforementioned result justifies the sub-critical nature of the nonlinearities {considered in Theorem \ref{main2D} and Theorem \ref{main3D}}.
    \item[(iii)] The case $b=0$ in two spatial dimensions was investigated in \cite{OMTS}.
    \item[(iv)] The case $b=0$ in three spatial dimensions was examined by G\'erard in \cite{P.GR}.
    \item[(v)] For both global existence and linearization, one can consider more general defocusing nonlinearities $f:\mathbb{R}\to\mathbb{R}$ satisfying either
    \begin{equation*}
        f(0)=0,\quad |f(u)-f(v)|\lesssim |u-v|\left(|u|^{p-1}+|v|^{p-1}\right)\quad\mbox{if}\quad N=3,
    \end{equation*}
    or, for $N=2$,
    \begin{equation*}
        \left\{\begin{array}{ll} &|f(u)|\,\underset{u\to 0}{\sim}\,|u|^\kappa, \quad \kappa>1,\\
&\displaystyle\lim_{|u|\to\infty} |f(u)|\,{\rm e}^{-\alpha u^2}=0, \quad \forall \,\,\alpha>0.
\end{array}\right.
    \end{equation*}
\end{itemize}
    }
    {\rm Nevertheless, to ensure clarity in our presentation, we restrict our analysis to nonlinearities specified by \eqref{3D} and \eqref{2D}.}
\end{rems}
\begin{Rem}
{{\rm The results presented above can be extended to higher dimensions $ N \geq 3 $. However, for clarity and to avoid unnecessary technical complications, we restrict our focus to dimensions $ N = 2, 3 $ in this work.}}
\end{Rem}

The remainder of this paper is organized as follows: Section \ref{S2} presents the key technical tools used in our analysis. Section \ref{S3} focuses on establishing the global existence and uniqueness of solutions in the energy space. In Section \ref{S4}, we address linearization and provide the proof of Theorem \ref{at2}. Throughout the paper, we adhere to the standard convention of representing positive constants with $C$, which may vary between expressions. Additionally, we utilize the shorthand $A\lesssim B$ to denote the inequality $A\leq CB$ for some $C>0$.

\section{Useful tools \& Auxiliary results}
\label{S2}
In this section, we will introduce some tools and auxiliary results crucial for establishing our main results. To estimate the nonlinear term in 2D, we will utilize  the following Moser-Trudinger inequalities.
\begin{thm}\cite{Souza1}\label{theo-MT}
		Let $0\leq \beta <2$ and $0<\alpha<2\pi(2-\beta)$. Then, there exists a positive constant $C=C(\alpha,\beta)$ such that
		\begin{equation}
		\label{WMT1}
		\int_{\R^2}\frac{{\rm e}^{\alpha| u(x)|^2}-1}{| x|^\beta}dx\leqslant C\int_{\R^2}\frac{| u(x)|^2}{| x|^\beta}dx,
		\end{equation}
		for all $u\in H^1(\R^2)$ with $\| \nabla u\|_{L^2(\R^2)}\leqslant 1$.
	\end{thm}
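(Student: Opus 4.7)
The plan is to follow the Adimurthi-Sandeep / de Souza approach for singular Moser-Trudinger inequalities, which combines Schwarz symmetrization with the classical unweighted Moser-Trudinger estimate and a careful analysis of the singularity at the origin.

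First I would reduce to radial non-increasing functions. Let $u^\ast$ denote the Schwarz rearrangement of $|u|$. The P\'olya-Szeg\"o principle gives $\|\nabla u^\ast\|_{L^2}\leq\|\nabla u\|_{L^2}\leq 1$, while the Hardy-Littlewood rearrangement inequality applied to the radially decreasing weight $|x|^{-\beta}$ allows one to compare the weighted integrals of $u$ and $u^\ast$ (possibly after a dyadic annular decomposition to handle the fact that both sides of \eqref{WMT1} move in the same direction under symmetrization). This reduces the problem to a radial non-increasing $u$. The second ingredient is the pointwise logarithmic bound: for such $u$ and $0<r\leq 1$,
\begin{equation*}
u(r)^2\leq\frac{1}{\pi}\log(1/r)+2u(1)^2,
\end{equation*}
obtained from $u(r)-u(1)=-\int_r^1 u'(s)\,ds$ and Cauchy-Schwarz against the measure $2\pi s\,ds$.

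Next I would split the integral into $\{|x|\leq 1\}$ and $\{|x|>1\}$. On the exterior, $|x|^{-\beta}\leq 1$, and the classical Adachi-Tanaka estimate (applicable since $2\pi(2-\beta)\leq 4\pi$) controls $\int_{|x|>1}({\rm e}^{\alpha u^2}-1)\,dx$ by $C\|u\|_{L^2}^2$, which I would then absorb into $\int u^2|x|^{-\beta}\,dx$ by isolating the contribution of $u$ on a large ball via a Hardy-type estimate coming from $\|\nabla u\|_{L^2}\leq 1$. On the interior, inserting the logarithmic bound yields ${\rm e}^{\alpha u(r)^2}\lesssim r^{-\alpha/\pi}$, and integration against $r^{-\beta}\cdot 2\pi r\,dr$ gives a finite bound precisely when $\alpha/\pi+\beta<2$, i.e.\ $\alpha<\pi(2-\beta)$.

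The main obstacle is the resulting factor-of-two gap: the crude logarithmic bound only recovers half of the sharp range. To close it I would follow Moser's rearrangement-truncation argument adapted to the weight: decompose $u=\min(u,M)+w$ with $w=(u-M)_+$ supported in a small ball near the origin, apply the change of variables $t=(2-\beta)\log(1/r)$ which converts $|x|^{-\beta}\,dx$ (in radial coordinates) into $(2-\beta)^{-1}\,e^{-\tau}\,dt$ on the half-line while transforming $u$ into a function $w(t)$ with a suitably normalized $L^2$-norm of $w'$, and then conclude via the one-dimensional Trudinger-type lemma. Under this substitution Moser's sharp constant $4\pi$ in the classical unweighted inequality is mapped precisely onto $2\pi(2-\beta)$, which is where the sharpness of the threshold genuinely emerges; quantitatively, the $L^2(|x|^{-\beta}\,dx)$ norm on the right-hand side of \eqref{WMT1} plays the role of Adachi-Tanaka's $L^2$ norm after the change of variable, so that the final bound by $C\int u^2|x|^{-\beta}\,dx$ comes out naturally from the one-dimensional calculation.
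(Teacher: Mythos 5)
The paper does not prove Theorem \ref{theo-MT}: it is imported verbatim from \cite{Souza1}, so there is no internal argument to compare yours against, and I can only assess the sketch on its own terms. It lives in the right circle of ideas (symmetrization, the radial logarithmic estimate, the change of variables under which $4\pi$ becomes $2\pi(2-\beta)$), but as written it has gaps that would prevent it from producing the stated inequality. The main one is structural: the right-hand side of \eqref{WMT1} is $C\int u^2|x|^{-\beta}\,dx$, which can be arbitrarily small, so it is not enough to show that the left-hand side is \emph{finite}. Your interior step (``inserting the logarithmic bound yields ${\rm e}^{\alpha u(r)^2}\lesssim r^{-\alpha/\pi}$, and integration \dots gives a finite bound'') yields at best a constant --- and not even a uniform one, since the suppressed factor ${\rm e}^{2\alpha u(1)^2}$ is unbounded over $\{\|\nabla u\|_{L^2}\le 1\}$. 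What converts finiteness into a bound \emph{linear} in the weighted $L^2$ norm is the Adachi--Tanaka level-set truncation, which is absent from your outline: on $\{u\le 1\}$ one uses the pointwise inequality ${\rm e}^{\alpha s^2}-1\le ({\rm e}^{\alpha}-1)s^2$ for $|s|\le 1$, which respects the weight and directly produces the right-hand side; on $\{u>1\}=B_{r_0}$ (for radial nonincreasing $u$) one applies the scale-invariant singular Moser--Trudinger inequality on $B_{r_0}$ to $u-u(r_0)\in H^1_0(B_{r_0})$, after $u^2\le(1+\epsilon)(u-u(r_0))^2+(1+\epsilon^{-1})$, and uses $r_0^{2-\beta}\lesssim \int_{B_{r_0}} u^2|x|^{-\beta}\,dx$ to recover the right-hand side. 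Relatedly, your treatment of $\{|x|>1\}$ fails: for $\beta>0$ the quantity $\|u\|_{L^2}^2$ is \emph{not} dominated by $\int u^2|x|^{-\beta}\,dx$ (mass far from the origin is damped by the weight), so the proposed ``absorption'' of the Adachi--Tanaka bound into the weighted norm cannot work; the pointwise bound on $\{u\le1\}$ is what is needed there as well.

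Two further points. The ``factor-of-two gap'' you set out to close is an artifact of using $(a+b)^2\le 2a^2+2b^2$; with $(a+b)^2\le(1+\epsilon)a^2+(1+\epsilon^{-1})b^2$ the logarithmic estimate already reaches every $\alpha<2\pi(2-\beta)$, so Moser's rearrangement--truncation machinery is not needed for Theorem \ref{theo-MT} --- it is what one needs for the endpoint $\alpha=2\pi(2-\beta)$ under the full $H^1$ normalization, i.e.\ for the separate Theorem \ref{MTT}. Finally, the symmetrization reduction requires an actual argument: because $|x|^{-\beta}$ is radially decreasing, \emph{both} sides of \eqref{WMT1} increase under Schwarz rearrangement, so the inequality for $u^{\ast}$ does not formally imply it for $u$; the ``dyadic annular decomposition'' you invoke to repair this is exactly where the difficulty sits and is not spelled out.
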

	We point out that $\alpha=2\pi(2-\beta)$ becomes admissible in
	(\ref{WMT1}) if we require $\|u\|_{H^1(\R^2)}\leq1$ instead of
	$\|\nabla u\|_{L^2(\R^2)}\leq1$, where
 $$
 \|u\|_{H^1}^2=\|u\|_{L^2}^2+\|\nabla u\|_{L^2}^2.
 $$
 More precisely, we have
	\begin{thm}\cite{Souza2}\label{MTT}
		Let $0\leq \beta <2$. We have
		\begin{equation}
		\label{WMT2}
		\mathcal{A}_\beta:=\sup_{\| u\|_{H^1(\R^2)}\leqslant 1}\int_{\R^2}\frac{{\rm e}^{2\pi(2-\beta)| u(x)|^2}-1}{| x|^\beta}dx<\infty.
		\end{equation}
  Moreover, the inequality \eqref{WMT2} is sharp in the sense that for any $\epsilon >0$, we have
  \begin{equation*}
		\sup_{\| u\|_{H^1(\R^2)}\leqslant 1}\int_{\R^2}\frac{{\rm e}^{(2\pi(2-\beta)+\epsilon)| u(x)|^2}-1}{| x|^\beta}dx=\infty.
		\end{equation*}
	\end{thm}
	\begin{rems}
	    ~{\rm \begin{itemize}
	        \item[(i)] The case $\beta=0$ has been previously examined in \cite{AD.TA}. Therefore, inequality \eqref{WMT1} can be regarded as an extension of \cite[Theorem 0.1]{AD.TA}, particularly in the context of incorporating a singular weight, that is $\beta>0$.
         \item[(ii)] Inequality \eqref{WMT2} was previously established in \cite{RU} for the case $\beta=0$.
	    \end{itemize}}
\end{rems}
For a function $F: \R\to\R$ with subcritical growth at infinity in 2D and appropriate behavior near zero, it can be shown that 
$|x|^b F(u)\in L^1(\R^2)$ whenever 
$u\in H^1(\R^2)$  is radially symmetric. More specifically:
\begin{Lem}
    \label{L-1}
    Let $F:\R\to\R$ be a continuous function that satisfies the following conditions:
    \begin{equation}
        \label{A-1}
        |F(u)|\underset{u\to 0}{\sim} |u|^\kappa, \quad \kappa>2,
        \end{equation}
        \begin{equation}
        \label{A-2}
        \lim_{|u|\to\infty} |F(u)|\,{\rm e}^{-\alpha u^2}=0, \quad \forall \,\,\alpha>0.
   \end{equation}
Then, for any radially symmetric $u\in H^1(\R^2)$, we have $|x|^b F(u)\in L^1(\R^2)$ provided that $0\leq b\leq \kappa/2-1$.
\end{Lem}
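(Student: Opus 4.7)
The strategy is to split the integration domain into an interior ball $\{|x|\leq R\}$ and its exterior $\{|x|>R\}$ with $R\geq 1$ to be chosen, and control each piece by a different tool: the Strauss-type radial decay on the outside, and Moser--Trudinger on the inside. A preliminary step combines the two hypotheses on $F$ into the pointwise bound
\[
|F(u)|\leq C_\alpha\bigl(|u|^\kappa + {\rm e}^{\alpha u^2}-1\bigr),\qquad u\in\R,
\]
valid for any fixed $\alpha>0$ (with $C_\alpha$ depending on $\alpha$), by treating $|u|\leq 1$ via \eqref{A-1} together with continuity of $F$, and $|u|>1$ via \eqref{A-2}.

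For the exterior region I would invoke the standard radial Sobolev decay
\[
|u(x)|\leq C\,|x|^{-1/2}\|u\|_{H^1(\R^2)},\qquad |x|\geq 1,
\]
available for any radial $u\in H^1(\R^2)$, and pick $R\geq 1$ so large that $|u(x)|\leq 1$ on $\{|x|>R\}$. There $|F(u)|\lesssim|u|^\kappa$, and the key factorization is
\[
|x|^b|u|^\kappa = |u|^2\cdot |x|^b|u|^{\kappa-2} \leq C\,|x|^{\,b-(\kappa-2)/2}\,|u|^2,
\]
which reduces matters to an $L^2$ bound once the exponent $b-(\kappa-2)/2$ is non-positive. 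This is exactly where the hypothesis $b\leq \kappa/2-1$ enters, so that the exterior integral is dominated by $\|u\|_{L^2(\R^2)}^2$. I expect this to be the conceptual heart of the argument: one must pull out precisely two powers of $u$ (the largest number compatible with an integrable remainder against the radial decay) to match the sharp threshold on $b$, since a cruder pointwise estimate $|u|^\kappa\lesssim|x|^{-\kappa/2}$ would force the much stronger restriction $b<\kappa/2-2$.

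For the interior region the factor $|x|^b\leq R^b$ is harmless and leaves
\[
\int_{|x|\leq R}\bigl(|u|^\kappa + {\rm e}^{\alpha u^2}-1\bigr)\,dx
\]
to control. The first summand is finite by the two-dimensional Sobolev embedding $H^1(\R^2)\hookrightarrow L^\kappa(\R^2)$ (valid for every finite $\kappa$). For the second, rescaling $v=u/\|u\|_{H^1(\R^2)}$ and choosing $\alpha>0$ small enough that $\alpha\|u\|_{H^1(\R^2)}^2\leq 4\pi$ brings us within the range of Theorem~\ref{MTT} with $\beta=0$, producing a bound by $\mathcal{A}_0<\infty$. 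Summing the interior and exterior estimates yields $|x|^b F(u)\in L^1(\R^2)$, completing the proposed proof.
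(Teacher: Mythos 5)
Your proof is correct, but it follows a genuinely different route from the paper's. The paper avoids any domain decomposition: it writes the single global factorization $|x|^b|F(u)| = \bigl(|x|^{1/2}|u|\bigr)^{2b}\,|u|^{-2b}|F(u)|$, bounds the first factor by a constant $C\|u\|_{H^1}^{2b}$ uniformly in $x$ via the Strauss inequality \eqref{est-Strauss}, and thereby reduces the lemma to showing $|u|^{-2b}|F(u)|\in L^1$, which follows from the pointwise bound $|u|^{-2b}|F(u)|\lesssim_\alpha |u|^{\kappa-2b}+\bigl({\rm e}^{\alpha u^2}-1\bigr)$ together with $\kappa-2b\geq 2$ and the known integrability of ${\rm e}^{\alpha u^2}-1$ for $H^1(\R^2)$ functions. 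In other words, the paper spends exactly $2b$ powers of $u$ on the Strauss decay to annihilate the weight $|x|^b$ everywhere, whereas you spend $\kappa-2$ powers, but only on the exterior region where $|u|$ is small, and handle the interior by freezing $|x|^b\leq R^b$ and invoking Moser--Trudinger directly. Both arguments hinge on the same two ingredients (radial decay plus exponential integrability) and both use the hypothesis $b\leq\kappa/2-1$ in the equivalent forms $\kappa-2b\geq 2$ and $b-(\kappa-2)/2\leq 0$; your version is a bit longer but more self-contained (you rederive ${\rm e}^{\alpha u^2}-1\in L^1$ from Theorem~\ref{MTT} by rescaling rather than citing it), and your remark that pulling out exactly two powers of $u$ is what reaches the sharp threshold on $b$ is a correct and worthwhile observation that the paper's proof leaves implicit.
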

\begin{proof}[Proof of Lemma \ref{L-1}]
Consider a radially symmetric function $u\in H^1(\R^2)$. By expressing  $|x|^b |F(u)|= \left(|x|^{1/2} |u|\right)^{2b}\, |u|^{-2b}|F(u)|$ and applying the radial Sobolev inequality \eqref{est-Strauss}, it is sufficient to prove that $|u|^{-2b}|F(u)|\in L^1$. Using conditions \eqref{A-1}-\eqref{A-2}, we observe that for any $\alpha>0$, the following holds:
\begin{equation*}
|u|^{-2b}|F(u)|\lesssim_\alpha\, |u|^{\kappa-2b}+\left({\rm e}^{\alpha u^2}-1\right).
\end{equation*}
Given that $\kappa-2b\geq 2$ and ${\rm e}^{\alpha u^2}-1 \in L^1$ (see \cite[Lemma 3.5]{BBMS}), the proof is thus complete.
\end{proof}
By following the argument outlined in the proof of Lemma \ref{L-1}, we can easily derive the result stated below.
\begin{Lem}
    \label{Tech-2D}
    Let $0< b\leq  1$ and $\alpha>0$. There exists a positive constant $C$ such that, for any radially symmetric function $u\in H^1(\R^2)$, we have
    \begin{equation}
        \label{Ener-2D}
        \left||x|^b({\rm e}^u-1-u)\right|\leq C\|u\|_{H^1}^{2b}\left(|u|^{2(1-b)}+\left({\rm e}^{\alpha u^2}-1\right)\right).
    \end{equation}
\end{Lem}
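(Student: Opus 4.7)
The plan is to mimic the strategy used in the proof of Lemma \ref{L-1}: combine the radial Sobolev (Strauss) inequality to extract the weight $|x|^b$, and then reduce to a pointwise scalar inequality that I can handle by splitting the range of $u$. Concretely, I would first write the factorization
\[
|x|^b\bigl({\rm e}^u-1-u\bigr) \;=\; \bigl(|x|^{1/2}\,|u|\bigr)^{2b}\;|u|^{-2b}\bigl({\rm e}^u-1-u\bigr),
\]
and apply the radial Sobolev estimate $|x|^{1/2}|u(x)|\lesssim \|u\|_{H^1}$ (valid for $u\in H^1_{\rad}(\R^2)$, and already invoked in Lemma \ref{L-1}) to the first factor. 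Since $0<b\le 1$, this yields $(|x|^{1/2}|u|)^{2b}\lesssim \|u\|_{H^1}^{2b}$ pointwise, so the lemma reduces to the purely one-variable statement
\[
|u|^{-2b}\bigl|{\rm e}^u-1-u\bigr| \;\lesssim\; |u|^{2(1-b)} + \bigl({\rm e}^{\alpha u^2}-1\bigr), \qquad u\in\R.
\]

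Next I would establish this pointwise inequality by splitting at $|u|=1$. On $\{|u|\le 1\}$, the Taylor expansion ${\rm e}^u-1-u=\tfrac12 u^2+O(u^3)$ gives $|{\rm e}^u-1-u|\lesssim u^2$, so that $|u|^{-2b}|{\rm e}^u-1-u|\lesssim |u|^{2(1-b)}$, which sits inside the first term on the right (note $2(1-b)\ge 0$ since $b\le 1$). On $\{|u|>1\}$, I would use the crude bound $|{\rm e}^u-1-u|\lesssim {\rm e}^{|u|}$ and check that $|u|^{-2b}\,{\rm e}^{|u|}\lesssim_{\alpha}\,{\rm e}^{\alpha u^2}-1$ uniformly on this set, which amounts to observing that $\alpha u^2-|u|\to+\infty$ as $|u|\to\infty$ for every $\alpha>0$, while the ratio is continuous and bounded on any compact subset of $\{|u|>1\}$ since ${\rm e}^{\alpha u^2}-1\ge {\rm e}^{\alpha}-1>0$ there.

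The main (mild) obstacle is the packaging rather than any genuine analytic difficulty: one has to make sure that the Strauss constant is uniform in $u$, that the constants arising in the two regimes can be merged into a single $C=C(\alpha,b)$, and that the limiting case $b=1$ (where $|u|^{2(1-b)}$ degenerates to the constant $1$) is still compatible with the small-$|u|$ regime, which it is because $|{\rm e}^u-1-u|/u^{2b}$ stays bounded on $\{|u|\le 1\}$ for $b\le 1$. Once these checks are carried out, combining the two regimes with the Strauss-based factorization yields the claimed inequality.
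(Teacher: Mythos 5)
Your proposal is correct and follows exactly the route the paper intends: the paper gives no separate proof of this lemma but states that it follows "by the argument outlined in the proof of Lemma \ref{L-1}," which is precisely the factorization $|x|^b|F(u)|=\bigl(|x|^{1/2}|u|\bigr)^{2b}|u|^{-2b}|F(u)|$ combined with the Strauss inequality \eqref{est-Strauss} and the pointwise bound \eqref{A-3} specialized to $\kappa=2$. Your splitting at $|u|=1$ correctly supplies the two regimes behind that pointwise bound, so nothing is missing.
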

\begin{proof}
 {Observe that 
$$
\left||x|^b({\rm e}^u - 1 - u)\right| = \left(|x|^{1/2}|u|\right)^{2b} |u|^{-2b}\left|{\rm e}^u - 1 - u\right|.
$$
Since $ |u|^{-2b}\left|{\rm e}^u - 1 - u\right| \,\underset{u \to 0}{\sim}\, \frac{1}{2}|u|^{2(1 - b)} $ and, for $ |u| \geq 1 $, 
$$
|u|^{-2b}\left|{\rm e}^u - 1 - u\right| \leq \left|{\rm e}^u - 1 - u\right| \lesssim_{\alpha} {\rm e}^{\alpha u^2} - 1 \quad \text{for any } \alpha > 0,
$$
the desired estimate \eqref{Ener-2D} follows by applying \eqref{est-Strauss} below.}   
\end{proof}
The above inequalities will be often combined with the following 2D energy estimate.
\begin{Prop}$($Energy Estimate$)$
\label{aprop2} {There exists a constant $C_0$ such that, for any $T>0$ and any $u\in C([0,T]; H^1)\cap C^1([0,T]; L^2)$, the following is true:}
\begin{equation}\label{eq aen}
\|u\|_T \leq C_0\,\Big
[\|u(0)\|_{H^{1}}+\|\partial_{t}
u(0)\|_{L^{2}} +\|\Box
u+u\|_{L^{1}_{T}(L^{2})}\Big],
\end{equation}
where 
\begin{equation}
    \label{norm-T}
    \|u\|_T:=\displaystyle\sup_{t\in [0,T]}\Big(\|u(t)\|_{H^{1}(\mathbb{R}^2)}
+\|\partial_{t}u(t)\|_{L^{2}( {\mathbb{R}}^{2})}\Big).
\end{equation}
\end{Prop}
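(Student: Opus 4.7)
\textbf{Proof plan for Proposition \ref{aprop2}.} The plan is to prove this via the standard energy identity for the linear Klein--Gordon equation, which (since $\bm=1$ in 2D) is precisely the operator $\Box + 1$ appearing on the right-hand side. Set $F := \Box u + u = \partial_t^2 u - \Delta u + u$ and introduce the natural energy functional
\begin{equation*}
\mathcal{E}(t) = \tfrac{1}{2}\Big(\|\partial_t u(t)\|_{L^2}^2 + \|\nabla u(t)\|_{L^2}^2 + \|u(t)\|_{L^2}^2\Big),
\end{equation*}
which is equivalent to $\tfrac{1}{2}\bigl(\|\partial_t u\|_{L^2}^2 + \|u\|_{H^1}^2\bigr)$.

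First I would differentiate $\mathcal{E}(t)$ in time and integrate by parts (assuming first that $u$ is smooth enough, then closing by a density argument) to get
\begin{equation*}
\frac{d}{dt}\mathcal{E}(t) = \int_{\R^2} \partial_t u \,(\partial_t^2 u - \Delta u + u)\,dx = \int_{\R^2} \partial_t u \cdot F \, dx.
\end{equation*}
Applying Cauchy--Schwarz then yields $\mathcal{E}'(t) \le \|\partial_t u(t)\|_{L^2}\|F(t)\|_{L^2} \le \sqrt{2\mathcal{E}(t)}\,\|F(t)\|_{L^2}$. Setting $e(t) := \sqrt{\mathcal{E}(t)}$ and using $2e(t)e'(t) = \mathcal{E}'(t)$ (on the set where $e>0$; the degenerate case is handled by a standard $\sqrt{\mathcal{E}+\varepsilon}$ regularization) reduces this differential inequality to
\begin{equation*}
e'(t) \le \tfrac{1}{\sqrt{2}}\,\|F(t)\|_{L^2}.
\end{equation*}

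Integrating on $[0,t]$ with $t\in[0,T]$ gives $e(t) \le e(0) + \tfrac{1}{\sqrt{2}}\,\|F\|_{L^1_T(L^2)}$. Since the quantity $\|u(t)\|_{H^1} + \|\partial_t u(t)\|_{L^2}$ is controlled by $2\sqrt{2}\,e(t)$ and $e(0) \le \tfrac{1}{\sqrt{2}}(\|u(0)\|_{H^1} + \|\partial_t u(0)\|_{L^2})$, taking the supremum over $t\in[0,T]$ yields the claimed bound with $C_0 = 2$.

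There is no genuine obstacle here: the only subtle point is the justification of $\mathcal{E}'(t) = \int \partial_t u \cdot F\, dx$ at the regularity level of finite-energy solutions, which is handled by smoothing the data, applying the identity to the smooth approximants, and passing to the limit using that the estimate itself provides the required continuity. The integration by parts of the $\Delta u$ term requires no boundary terms thanks to the $L^2$-integrability of $u$, $\nabla u$, and $\partial_t u$.
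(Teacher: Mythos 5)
Your argument is correct and complete: the energy identity, Cauchy--Schwarz, the passage to $e(t)=\sqrt{\mathcal{E}(t)}$, and the final bookkeeping giving $C_0=2$ are all accurate, and the density/regularization caveats are appropriately flagged. The paper states Proposition \ref{aprop2} without proof, treating it as the standard energy estimate for the linear Klein--Gordon operator $\Box+1$, and your derivation is exactly the classical argument the authors are invoking.
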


We will also review the following Strichartz estimates. To do this, we introduce the concept of admissible pairs in three dimensions ($N=3$).
\begin{Def}
\label{STP}
Let $N=3$. A pair $(q,r)$ is said to be $H^1$ wave-admissible if
\begin{equation}
\label{STP1}
\frac{1}{q}+\frac{3}{r}=\frac{1}{2},\;\;2< q\leq\infty,\;\; 6\leq r<\infty.
\end{equation}
\end{Def}
For example, $(\infty,6)$, $(5,10)$, $(4,12)$, and $(8,8)$ are admissible pairs.
\begin{Prop}[Strichartz Estimates]
\label{ST}
{Consider $N=3$, and let $(q,r)$ be $H^1$ wave-admissible as defined in \eqref{STP1}. For $T>0$, there exists a constant $C=C(q)>0$ such that, for any $v\in C([0,T]; H^1)\cap C^1([0,T]; L^2)$, the following holds:}
\begin{equation}
\label{StEs}
\|v\|_{L^q([0,T]; L^r(\R^N))}\leq C\left(\|\nabla v(0)\|_{L^2}+\|\partial_t v(0)\|_{L^2}+\|\Box v\|_{L^{q'}([0,T]; L^{r'}(\R^N))}\right).
\end{equation}
\end{Prop}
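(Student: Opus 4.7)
The plan is to establish the estimate in two stages by splitting $v=v_L+v_N$ via Duhamel's formula, where
\begin{equation}
v_L(t)=\cos(t\sqrt{-\Delta})v(0)+\frac{\sin(t\sqrt{-\Delta})}{\sqrt{-\Delta}}\partial_t v(0),\quad v_N(t)=\int_0^t \frac{\sin((t-s)\sqrt{-\Delta})}{\sqrt{-\Delta}}\,\Box v(s)\,ds,
\end{equation}
so that $v_L$ solves the free wave equation with the prescribed Cauchy data and $v_N$ carries the forcing. It then suffices to prove the homogeneous estimate $\|v_L\|_{L^q([0,T];L^r)}\lesssim \|\nabla v(0)\|_{L^2}+\|\partial_t v(0)\|_{L^2}$ and the retarded estimate $\|v_N\|_{L^q([0,T];L^r)}\lesssim \|\Box v\|_{L^{q'}([0,T];L^{r'})}$ separately.

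For $v_L$, I would invoke the classical dispersive estimate for the half-wave propagator in $N=3$,
\begin{equation}
\|{\rm e}^{\pm it\sqrt{-\Delta}}f\|_{L^\infty(\R^3)}\;\lesssim\;|t|^{-1}\,\bigl\||\nabla|^{2}f\bigr\|_{L^1(\R^3)},
\end{equation}
together with the trivial $L^2$-isometry, and then apply the abstract Keel--Tao machinery (or the older $TT^*$/Hardy--Littlewood--Sobolev argument) to the rescaled propagator. The wave-admissibility condition \eqref{STP1} is exactly the scaling relation that makes a single derivative fall on the data, which produces the $\dot H^1$-level estimate claimed for $v_L$.

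For $v_N$, the same $TT^*$ duality gives the untruncated analogue $\bigl\|\int_\R \frac{\sin((t-s)\sqrt{-\Delta})}{\sqrt{-\Delta}}F(s)\,ds\bigr\|_{L^q_tL^r_x}\lesssim \|F\|_{L^{q'}_t L^{r'}_x}$; passing to the truncated integral $\int_0^t$ requires the Christ--Kiselev lemma, whose hypothesis $q>q'$ is satisfied precisely because \eqref{STP1} forces $q>2$. Summing the two pieces over $[0,T]$ yields \eqref{StEs}. The only genuine delicacy is bookkeeping the derivative loss in the dispersive estimate so that it ends up consuming one full derivative on the data (not more, not less), which is dictated by the scaling identity $\frac{1}{q}+\frac{3}{r}=\frac{1}{2}$; all other inputs are standard, so in practice this proposition would typically be cited directly from the Keel--Tao framework rather than reproved in detail.
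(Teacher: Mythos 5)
First, a point of reference: the paper does not actually prove Proposition \ref{ST}; it is listed in Section \ref{S2} as a classical tool and invoked as a black box, so there is no in-paper proof to compare against. Your treatment of the homogeneous piece $v_L$ is the standard route and is sound in outline: the frequency-localized dispersive bound $\|{\rm e}^{\pm it\sqrt{-\Delta}}f\|_{L^\infty(\R^3)}\lesssim |t|^{-1}\|f\|_{\dot B^{2}_{1,1}}$ (the Besov norm, rather than $\||\nabla|^2f\|_{L^1}$, is the correct right-hand side, but this is the bookkeeping issue you flag yourself), the $L^2$ conservation, and the Keel--Tao/$TT^*$ interpolation, with \eqref{STP1} being exactly the scaling relation that places the estimate at the $\dot H^1$ level.

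The inhomogeneous step, however, has a genuine gap. Applying $TT^*$ to $T={\rm e}^{it\sqrt{-\Delta}}|\nabla|^{-1}\colon L^2\to L^q_tL^r_x$ yields boundedness of $F\mapsto\int_{\R}{\rm e}^{i(t-s)\sqrt{-\Delta}}|\nabla|^{-2}F(s)\,ds$ from $L^{q'}_tL^{r'}_x$ to $L^q_tL^r_x$, whereas the Duhamel kernel $\sin\big((t-s)\sqrt{-\Delta}\big)/\sqrt{-\Delta}$ carries only one inverse derivative; the two differ by a full power of $|\nabla|$, so the retarded estimate $\|v_N\|_{L^qL^r}\lesssim\|\Box v\|_{L^{q'}L^{r'}}$ you assert does not follow from duality. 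Indeed it cannot hold with a $T$-independent constant: under $u\mapsto u(\lambda\,\cdot,\lambda\,\cdot)$ the left-hand side scales like $\lambda^{-1/q-3/r}=\lambda^{-1/2}$ while $\|\Box u\|_{L^{q'}L^{r'}}$ scales like $\lambda^{2-1/q'-3/r'}=\lambda^{-3/2}$, so the dual-pair estimate fails globally in time. (This is partly a defect of the statement \eqref{StEs} itself: the admissible forcing exponents $(\tilde q',\tilde r')$ must satisfy the gap condition $\frac{1}{\tilde q'}+\frac{3}{\tilde r'}=\frac52$ at the $\dot H^1$ level, which the dual $(q',r')$ of an \eqref{STP1}-pair never does.) Note that every application of \eqref{StEs} in the paper --- in \eqref{Stab}, \eqref{Cont} and \eqref{u-n-ST} --- measures the forcing in $L^1_T(L^2)$, i.e.\ $(\tilde q',\tilde r')=(1,2)$, which does satisfy the gap condition; for that version no Christ--Kiselev lemma is needed, since writing $v_N(t)=\int_0^T\mathbf{1}_{\{s\le t\}}\frac{\sin((t-s)\sqrt{-\Delta})}{\sqrt{-\Delta}}\,\Box v(s)\,ds$ and applying Minkowski's inequality together with the homogeneous estimate with data $(0,\Box v(s))$ gives $\|v_N\|_{L^q_T(L^r)}\lesssim\int_0^T\|\Box v(s)\|_{L^2}\,ds$ directly. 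I would prove (or cite) the estimate in that $L^1_T(L^2)$ form, which is the one actually used.
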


Estimating the nonlinear term in $L^1_T(L^2)$ presents a significant challenge due to the weight $|x|^b$. This difficulty can be addressed by using the following radial Sobolev inequality.
\begin{Lem}[\cite{Strauss}]
    \label{Rad-Sob} For $N\geq 2$ and any radially symmetric function $\phi \in H^1(\mathbb{R}^N)$, the following holds:
 \begin{align} \label{est-Strauss}
		|x|^{\frac{N-1}{2}} |\phi(x)| \leq C \|\nabla \phi\|^{\frac{1}{2}}_{L^2} \|\phi\|^{\frac{1}{2}}_{L^2}.
		\end{align}
\end{Lem}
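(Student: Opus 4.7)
The plan is to reduce the inequality to a one-dimensional identity along the radial direction and then apply Cauchy–Schwarz. First I would handle the smooth, compactly supported case: given a radially symmetric $\phi \in C_c^\infty(\mathbb{R}^N)$ write $\phi(x) = \phi(r)$ with $r = |x|$, and use the fundamental theorem of calculus together with $\phi(r) \to 0$ as $r \to \infty$ to obtain
\begin{equation*}
\phi(r)^2 = -\int_r^\infty \frac{d}{ds}\bigl[\phi(s)^2\bigr]\,ds = -2\int_r^\infty \phi(s)\,\phi'(s)\,ds.
\end{equation*}

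The next step is the weighting trick that produces the $|x|^{(N-1)/2}$ factor. Multiplying the previous identity by $r^{N-1}$ and using $r^{N-1} \leq s^{N-1}$ on the integration interval $s \geq r$, I would estimate
\begin{equation*}
r^{N-1}\phi(r)^2 \leq 2\int_r^\infty |\phi(s)|\,|\phi'(s)|\,s^{N-1}\,ds.
\end{equation*}
An application of the Cauchy–Schwarz inequality in the measure $s^{N-1}\,ds$ then yields
\begin{equation*}
r^{N-1}\phi(r)^2 \leq 2\left(\int_0^\infty |\phi(s)|^2 s^{N-1}\,ds\right)^{1/2}\left(\int_0^\infty |\phi'(s)|^2 s^{N-1}\,ds\right)^{1/2}.
\end{equation*}
Recognizing these two integrals (up to the surface measure constant $\omega_{N-1}$ of $S^{N-1}$) as $\|\phi\|_{L^2(\mathbb{R}^N)}^2$ and $\|\nabla \phi\|_{L^2(\mathbb{R}^N)}^2$ respectively — here I use that for a radial function $|\nabla\phi(x)| = |\phi'(r)|$ — and taking square roots produces the claimed pointwise bound with $C = \sqrt{2/\omega_{N-1}}$.

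Finally, I would pass from smooth compactly supported radial functions to general $\phi \in H^1_{\rad}(\mathbb{R}^N)$ by a density argument: approximate $\phi$ by a sequence $(\phi_k) \subset C_c^\infty(\mathbb{R}^N)$ of radial functions in the $H^1$-norm, apply the inequality to the differences $\phi_k - \phi_\ell$ to obtain uniform convergence on $\{|x|\geq \varepsilon\}$ for every $\varepsilon>0$, and conclude that the limit satisfies the same pointwise estimate away from the origin (which is all the statement requires). The whole argument is essentially one-dimensional; the only delicate point is being careful that the radial representation of $\nabla\phi$ truly reduces the gradient norm to $\int |\phi'(s)|^2 s^{N-1}\,ds$, and that the boundary term at infinity vanishes — both of which follow from standard facts about radial Sobolev functions. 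There is no significant obstacle here; the proof is classical and short.
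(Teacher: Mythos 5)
Your argument is correct. The paper does not prove this lemma at all---it simply cites Strauss \cite{Strauss}---and what you have written is precisely the classical proof of that result: the one-dimensional identity $\phi(r)^2=-2\int_r^\infty \phi\,\phi'\,ds$, the monotonicity $r^{N-1}\le s^{N-1}$ on the integration range, Cauchy--Schwarz with respect to $s^{N-1}\,ds$, and a density argument (the estimate applied to differences gives uniform convergence of $r^{(N-1)/2}\phi_k(r)$ on $(0,\infty)$, so the pointwise bound passes to the continuous representative of $\phi$ away from the origin). The constant $C=\sqrt{2/\omega_{N-1}}$ you obtain is the standard one, and no step requires more than $N\ge 2$ and radial symmetry.
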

We also recall the following sharp Gagliardo-Nirenberg inequality.
\begin{Lem}
    \label{GNI}
    Given $N\geq 2$, $\theta>0$, $\lambda\geq 2+\frac{2\theta}{N-1}$, and $\lambda\leq \frac{2N+2\theta}{N-2}$ for $N\geq 3$, the sharp Gagliardo-Nirenberg inequality stated below holds:
    \begin{equation}
        \label{SGNI}
        \int_{\R^N}\, |x|^\theta\,|\varphi(x)|^\lambda\,dx\leq K_{opt}\,\|\varphi\|_{L^2}^A\, \|\nabla\varphi\|_{L^2}^B,
    \end{equation}
    where $\varphi\in H^1(\mathbb{R}^N)$ is radially symmetric and
    \begin{equation*}
        A=2+\theta-\frac{N-2}{2}\lambda,\quad B=\frac{N}{2}(\lambda-2)-\theta.
    \end{equation*}
\end{Lem}
\begin{Rem}
    {\rm The proof of Lemma \ref{GNI} can be found in several references, such as \cite{DMS, Fuz}. In particular, it has been established that the upper bound $\frac{2N+2\theta}{N-2}$ is optimal for $N\geq 3$, as it corresponds to the energy critical regularity. Additionally, it has been shown in \cite[Lemma 2.5]{DMS} that the lower bound $2+\frac{2\theta}{N-1}$ is also optimal for the Gagliardo-Nirenberg inequality \eqref{SGNI}. }
\end{Rem}
By utilizing Lemma \ref{GNI}, we derive a crucial estimation that will aid in our analysis of well-posedness in three spatial dimensions.
\begin{Lem}
    \label{Interp-Est}
    {Given $N=3$, $b>0$, $T>0$, $3+b<p<4+b$, $u\in L^\infty_T(H^1)$, and $v\in L^q_T(L^r)$, the following inequality holds}
    \begin{equation}
        \label{Int-Est}
        \left\||x|^b |u|^{p-1}v\right\|_{L^1_T(L^2)}\leq C T^{\frac{5+b-p}{2}}\, \|u\|_{L^\infty_T(H^1)}^{p-1}\,\|v\|_{L^q_T(L^r)},
    \end{equation}
    where $(q,r)$ is given by \eqref{qr} below.
\end{Lem}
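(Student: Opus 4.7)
The plan is to combine three classical estimates in sequence: Hölder in the spatial variable, the weighted radial Gagliardo--Nirenberg inequality of Lemma \ref{GNI}, and Hölder in time. For a fixed time $t \in [0,T]$, I would first apply Hölder in $x$ with exponents $s$ and $r$ satisfying $\tfrac{1}{s} + \tfrac{1}{r} = \tfrac{1}{2}$ to obtain
\[
\bigl\| |x|^b |u(t)|^{p-1} v(t) \bigr\|_{L^2_x} \leq \bigl\| |x|^b |u(t)|^{p-1} \bigr\|_{L^s_x} \|v(t)\|_{L^r_x}.
\]
The exponent $r$ is to be chosen so that $(q,r)$, with $q$ dictated by the wave-admissibility relation $\tfrac{1}{q}+\tfrac{3}{r} = \tfrac{1}{2}$, is precisely the pair given in \eqref{qr}.

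Next, I would bound the first factor by Lemma \ref{GNI} with $N=3$, weight exponent $\theta = bs$ and power $\lambda = (p-1)s$. The choice $s = 6/(p-1-b)$, which forces $r = 6/(4+b-p) \geq 6$ and $q = 2/(p-3-b) > 2$, simultaneously renders $(q,r)$ wave-admissible in the sense of Definition \ref{STP} and makes $(\theta,\lambda)$ satisfy the two-sided constraint $2 + \tfrac{2\theta}{N-1} \leq \lambda \leq \tfrac{2N+2\theta}{N-2}$ of Lemma \ref{GNI}; each of these inequalities reduces to an elementary inequality in $p$ and $b$ that holds precisely because $3+b < p < 4+b$. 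With these choices Lemma \ref{GNI} yields
\[
\bigl\| |x|^b |u(t)|^{p-1} \bigr\|_{L^s_x} \leq C \|u(t)\|_{L^2}^{A/s} \|\nabla u(t)\|_{L^2}^{B/s} \leq C \|u(t)\|_{H^1}^{p-1},
\]
where the last step uses $A+B = (p-1)s$ together with $\|u\|_{L^2}, \|\nabla u\|_{L^2} \leq \|u\|_{H^1}$.

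Integrating in $t\in[0,T]$ and applying Hölder in time with exponents $q$ and $q' = q/(q-1)$ then gives
\[
\bigl\| |x|^b |u|^{p-1} v \bigr\|_{L^1_T L^2_x} \leq C \|u\|_{L^\infty_T H^1}^{p-1} \|v\|_{L^q_T L^r_x} \, T^{1-1/q},
\]
and a direct computation with the above values of $q$ produces $1 - 1/q = (5+b-p)/2$, which is exactly the stated power of $T$. I expect the main obstacle to be purely one of exponent bookkeeping: the crux is verifying that a single triple $(q,r,s)$ meets simultaneously the wave-admissibility of Definition \ref{STP} and the full range conditions of the weighted Gagliardo--Nirenberg inequality, so that no auxiliary interpolation is required. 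Once that check is in place, the three Hölder-type steps compose cleanly to give the claimed bound.
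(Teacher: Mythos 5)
Your argument is correct and coincides with the paper's own proof: the same spatial H\"older splitting with $\gamma=s=\frac{6}{p-1-b}$ and $r=\frac{6}{4+b-p}$, the same application of the weighted radial Gagliardo--Nirenberg inequality of Lemma \ref{GNI} with $\theta=bs$, $\lambda=(p-1)s$, and the same H\"older step in time producing $T^{1/q'}=T^{\frac{5+b-p}{2}}$. The exponent bookkeeping you flag as the crux indeed checks out (the lower and upper constraints of Lemma \ref{GNI} reduce to $p\geq 1+b$ and $b\geq 0$ respectively, and $(q,r)$ satisfies \eqref{STP1}), so no further work is needed.
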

\begin{proof}[Proof of Lemma \ref{Interp-Est}] By employing H\"older's inequality with $\gamma=\frac{6}{p-1-b}$ and $r=\frac{6}{4+b-p}$, followed by the Gagliardo-Nirenberg inequality \eqref{SGNI} with $\theta=b\gamma$ and $\lambda=(p-1)\gamma$, we derive
\begin{equation*}
    \begin{split}
        \left\||x|^b |u|^{p-1}v\right\|_{L^2}&\leq \left\||x|^b |u|^{p-1}\right\|_{L^\gamma}\left\|v\right\|_{L^r}\\
        &\lesssim \|u\|_{H^1}^{p-1}\, \|v\|_{L^r}.   \end{split}
\end{equation*}
Next, by applying H\"older's inequality in the temporal domain $[0,T]$ with $q=\frac{2}{p-b-3}$, we deduce that
\begin{equation*}
    \begin{split}
        \left\||x|^b |u|^{p-1}v\right\|_{L^1_T(L^2)}
        &\lesssim \left\|\|u(t)\|_{H^1}^{p-1}\right\|_{L^{q'}_T}\, \|v\|_{L^q_T(L^r)}\\
        &\lesssim T^{\frac{5+b-p}{2}}\,\|u\|_{L^\infty_T(H^1)}^{p-1}\,\|v\|_{L^q_T(L^r)}.
        \end{split}
\end{equation*}
This concludes the proof of Lemma \ref{Interp-Est}.
\end{proof}
\begin{Rem}
    {\rm As we will see shortly, the pair $(q, r)$ is admissible as it satisfies the condition \eqref{STP1}.}
\end{Rem}
To address the linearization, we need to consider some important facts regarding convergence in measure.
\begin{Def}\label{ad1d}
{Let $\Omega$ be an open subset of $\mathbb{R}^d$, and let $u_n: \Omega\to\R$ be a sequence of measurable functions.} We say that a sequence $(u_n)$ converges in measure to $u$ on $\Omega$ if
$$\forall\varepsilon >0,\quad \Big|\left\{x\in\Omega,|u_n(x)-u(x)|>\varepsilon\right\}\Big|\longrightarrow 0\quad\mbox{as}\quad
n\longrightarrow \infty,$$
where $|\cdot|$ is the Lebesgue measure on $\mathbb{R}^d$.
\end{Def}
\begin{Lem}\label{armf}
{Let $\Omega$ be an open subset of $\mathbb{R}^d$, $1\leq q<\infty$, and let $u_n: \Omega\to\R$ be a sequence in $L^q(\Omega)$.}
\begin{enumerate}
\item[1)] If $u_n\longrightarrow u\mbox{ in }L^q(\Omega)\,(1\leq q <\infty),\;\; \mbox{ then }\;\; u_{n}\longrightarrow u\;\; \mbox{ in measure on }\Omega$  .
\item[2)] For any continuous function $G: \R\to\R$
$$ u_n\to u\mbox{ in measure on }\Omega\quad \Longrightarrow\quad G(u_n)\longrightarrow G(u)\;\mbox{ in measure on }\;\Omega.$$
\end{enumerate}
\end{Lem}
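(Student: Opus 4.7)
For assertion (1), I would invoke Chebyshev's (Markov's) inequality in its most elementary form: for every $\varepsilon>0$,
\begin{equation*}
\bigl|\{x\in\Omega : |u_n(x)-u(x)|>\varepsilon\}\bigr| \leq \varepsilon^{-q}\int_\Omega |u_n-u|^q\,dx = \varepsilon^{-q}\|u_n-u\|_{L^q(\Omega)}^q,
\end{equation*}
and the right-hand side tends to $0$ by the $L^q$-convergence hypothesis. This disposes of (1) in a single step.

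For assertion (2), my plan is a truncation argument that reduces matters to a compact range where $G$ is uniformly continuous. Fix $\varepsilon>0$ and $M>0$, and split
\begin{equation*}
\bigl\{|G(u_n)-G(u)|>\varepsilon\bigr\}\subset\{|u|>M\}\cup\Bigl(\{|u|\leq M\}\cap\{|G(u_n)-G(u)|>\varepsilon\}\Bigr).
\end{equation*}
Since $G$ is uniformly continuous on $[-M-1,M+1]$, there exists $\delta=\delta(\varepsilon,M)\in(0,1)$ such that $|u_n-u|<\delta$ together with $|u|\leq M$ forces $|G(u_n)-G(u)|<\varepsilon$. Hence the second piece is contained in $\{|u_n-u|\geq\delta\}$, whose measure tends to $0$ by the assumed convergence in measure of $u_n$ to $u$. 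Choosing $M$ first large enough to make $|\{|u|>M\}|$ smaller than any prescribed tolerance (which uses only that $u$ is finite almost everywhere, together with the mild integrability available in the application), and then sending $n\to\infty$, completes the plan.

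The main obstacle lies precisely in this final truncation step: the continuity of $G$ is only pointwise, so one cannot pass directly from $|u_n-u|$ being small to $|G(u_n)-G(u)|$ being small without first localizing to a bounded range of values. The splitting above is what accomplishes this. An equivalent, more abstract route that bypasses the explicit truncation is the subsequence principle: every subsequence of $G(u_n)$ admits a further subsequence converging almost everywhere to $G(u)$, obtained by extracting an a.e.-convergent subsequence of $u_n$ and invoking the continuity of $G$, which is precisely the characterization of convergence in measure.
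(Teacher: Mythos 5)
The paper states this lemma without proof, so there is no argument of its own to compare against; both parts of your proposal follow the standard route. Your proof of part (1) via Chebyshev's inequality is correct and complete.

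For part (2), the truncation is the right idea, but the step where you make $|\{|u|>M\}|$ small ``using only that $u$ is finite almost everywhere'' is precisely where the argument --- and in fact the statement itself --- needs a hypothesis that is not written. On an open set $\Omega$ of infinite measure, finiteness a.e.\ of $u$ does not force $|\{|u|>M\}|\to 0$ as $M\to\infty$ (take $u(x)=x$ on $\R$), and part (2) as stated then fails: with $u_n(x)=x+\tfrac1n$, $u(x)=x$ and $G(s)=s^2$ one has $u_n\to u$ uniformly, hence in measure, while $\bigl|\{|G(u_n)-G(u)|>\varepsilon\}\bigr|=\bigl|\{|2x/n+n^{-2}|>\varepsilon\}\bigr|=\infty$ for every $n$. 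The same caveat defeats your alternative ``subsequence principle'' route, since a.e.\ convergence implies convergence in measure only on sets of finite measure. The clean fix is to assume $|\Omega|<\infty$ (or, more generally, $|\{|u|>M\}|\to 0$ as $M\to\infty$), under which your argument closes immediately: the sets $\{|u|>M\}$ decrease to a null set and continuity from above of a finite measure gives the tail estimate. In the paper this is harmless, because the lemma is only invoked for solutions supported, by finite propagation speed, in the fixed compact set $\{|x|\le R_0+T\}$, so one may work on a bounded $\Omega$; but you should state the finiteness assumption explicitly rather than defer it to ``mild integrability available in the application.''
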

The proof of Lemma \ref{armf} is straightforward; hence, we omit the details here.
\section{Global well posedness}
\label{S3}

This section is dedicated to proving Theorems \ref{main2D} and \ref{main3D}, which concern global well-posedness in the energy space. The strategy of proofs can be summarized as follows.
To demonstrate local existence of solutions, we employ a classical fixed-point argument. We then establish uniqueness in an unconditional sense. Finally, we prove that the maximal solution is global.

\subsection{Proof of Theorem \ref{main2D}}
\subsubsection{Local Existence}
Let us start by proving the existence. For $T>0$, we denote by  
$$\bX_T:=\mC([0,T],H^1(\R^2)) \cap  \mC^1([0,T],L^2(\R^2)) $$
endowed by the norm  $\|.\|_T$ as defined in \eqref{norm-T}.
For $R>1+\|u_0\|_{H^1}+\|u_1\|_{L^2}$, let $B_T(R)$ denotes the ball in $\bX_T$ of radius $R$ and centered at the origin. We define the map $\Phi$ on the ball $B_T(R)$ by $v \longmapsto\Phi (v)=\tilde{v} $
where $\tilde{v}$ solves 
\begin{equation*}
\Box\tilde{v}+\tilde{v}=-|x|^b f(v+v^0) , \quad\textbf{}
(\tilde{v}(0),\tilde{v}_t(0))=(0,0),
\end{equation*}
and $v^0$ stands for the solution of the  homogeneous equation 
\begin{equation*}
\Box{v^0}+ v^0=0, \quad
(v^0(0),\partial_tv^0(0)=(u_0, u_1).
\end{equation*}
Here we suppose that the nonlinearity $f$ is given by \eqref{2D}.  {Observe that, by combining Lemma \ref{Tech-2D} with the Moser-Trudinger inequality \eqref{WMT1}, it is clear that $|x|^b f(v+v^0)$ belongs to $L^1_T(L^2)$ for any  $v\in \bX_T$. Consequently, the existence of the solution $\tilde{v}$ follows directly.}
We will prove that the map $\Phi$ is a contraction from $B_T(R)$ into itself for $T, R>0$ suitably chosen.
We start by showing {that $\Phi$ maps $B_T(R)$ to itself}. Let $v\in B_T(R)$. Applying the energy estimate \eqref{eq aen} one gets 
\begin{equation}
    \label{Phi-v-T}
    \|\Phi (v) \|_{T}=\| \tilde{v}\|_T \leq C_0 \left\| |x|^b f(v+v^0)\right\|_{L^1_T(L^2)}. 
\end{equation}

For $\alpha >0$, we obtain by applying Lemma \ref{Tech-2D} that 
\begin{equation*}
\int_{\R^2}|x|^{2b}|f(v+v^0)|^2dx\leq C\|v+v^0\|_{H^1}^{4b} \int_{\R^2}\left(|v+v^0|^{4(1-b)}+(e^{2\alpha (v+v^0)^2}-1)\right)dx.
\end{equation*}
Note that by the energy estimate \eqref{eq aen} and the fact that $v\in B_T(R)$, we get
\begin{equation*}
    \|v+v^0\|_{H^1}\leq \Lambda:=R+C_0(\|u_0\|_{H^1}+\|u_1\|_{L^2}).
\end{equation*}
Therefore, considering \eqref{WMT2} (with $\alpha\leq \frac{2\pi}{\Lambda^2}$) and noting that $4(1-b)\geq 2$ and $\Lambda>1$, it follows that 
\begin{equation}
\label{v-f}
\begin{split}
    \| |x|^bf(v+v^0)  \|_{L^2}^2&\leq C \Lambda^{4b}\left(\|v+v^0\|_{L^{4(1-b)}}^{4(1-b)}+\int_{\R^2}({\rm e}^{2\alpha\Lambda^2(\frac{v+v_0}{\Lambda})^2}-1)dx\right)\\
    &\leq C\left(\Lambda^4+\Lambda^{4b}\right)\\
    &\leq C \Lambda^4.
    \end{split}
\end{equation}
From \eqref{v-f} and  \eqref{Phi-v-T}, it follows straightforwardly that 
\begin{equation*}
     \|\Phi (v) \|_{T}\leq C_0 C\Lambda^2 T\leq R,
\end{equation*}
provided  $T>0$ is chosen to be sufficiently small. This implies that $\Phi(B_T(R)\subset B_T(R)$.\\
It remains to  show that $\Phi$ is a contraction mapping on 
$B_T(R)$. To this end, let $v_1, v_2\in B_T(R)$. Define $\tilde{v}:=\Phi(v_1)-\Phi(v_2)=\tilde{v}_1-\tilde{v}_2$, which satisfies
\begin{equation*}
\Box \tilde{v}+\tilde{v}=-|x|^b\left( f(v_1+v^0)-f(v_2+v^0)\right), \quad
(\tilde{v}(0),\tilde{v}_t(0))=(0, 0).
\end{equation*}
By expressing $f(v_1+v^0)-f(v_2+v^0)$ as $(v_1-v_2)({\rm e}^{\bar{v}}-1)$, where $\bar{v}=(1-\theta)(v_1+v^0)+\theta(v_2+v^0)$, and applying the energy estimate \eqref{eq aen}, we obtain
\begin{equation}
\label{Cont-2D-1}
\|\Phi(v_1)-\Phi(v_2)\|_T\leq C_0 \left\||x|^b(v_1-v_2)({\rm e}^{\bar{v}}-1)\right\|_{L^1_T(L^2)}.
\end{equation}
First, we estimate the $L^2$ norm with respect to the spatial variable. Applying the elementary inequality $({\rm e}^s-1)^2 \leq {\rm e}^{2s}-1-2s$ for all $s\in\R$ and utilizing a convexity argument, we obtain
\begin{equation}
    \label{Cont-2D-2}
    \int_{\R^2} |x|^{2b}(v_1-v_2)^2\left({\rm e}^{\bar{v}}-1\right)^2 dx\leq  \mathbf{I_1}+\mathbf{I_2},
\end{equation}
where
\begin{equation*}
    \mathbf{I_j}=\int_{\R^2} |x|^{2b}(v_1-v_2)^2\left({\rm e}^{2(v_j+v^0)}-1-2(v_j+v^0)\right) dx,\quad j=1,2.
\end{equation*}
We estimate $\mathbf{I_j}$ as follows
\begin{equation*}
    \begin{split}
        \mathbf{I_j}&=\int_{\R^2} \left(|x|^{1/2}|v_1-v_2|\right)^2 |x|^{2b-1}\left({\rm e}^{2(v_j+v^0)}-1-2(v_j+v^0)\right) dx\\
        &\leq C\|v_1(t)-v_2(t)\|_{H^1}^2\int_{\R^2}\frac{{\rm e}^{4\alpha(v_j+v^0)^2}-1}{|x|^{1-2b}}dx\\
        &=C\|v_1(t)-v_2(t)\|_{H^1}^2\int_{\R^2}\frac{{\rm e}^{4\alpha\Lambda^2(\frac{v_j+v^0}{\Lambda})^2}-1}{|x|^{1-2b}}dx,
    \end{split}
\end{equation*}
where we have applied the radial Sobolev inequality \eqref{est-Strauss} and the observation that
\begin{equation*}
\mathbf{K}_\alpha:=\sup_{s\in\R}\left(\frac{{\rm e}^s-1-s}{{\rm e}^{\alpha s^2}-1}\right)<\infty, \quad \forall \, \alpha>0.
\end{equation*}

Choosing $\alpha\leq \frac{\pi(1+2b)}{2\Lambda^2}$ and applying the singular Moser-Trudinger inequality \eqref{WMT2}, we infer that
\begin{equation}
    \label{Cont-2D-4}
     \mathbf{I_j}\leq C \|v_1(t)-v_2(t)\|_{H^1}^2.
\end{equation}
Plugging estimates \eqref{Cont-2D-1}, \eqref{Cont-2D-2}, and \eqref{Cont-2D-4} together, we find that 
\begin{equation*}
    \|\Phi(v_1)-\Phi(v_2)\|_T\leq C T\|v_1-v_2\|_T.
\end{equation*}
By choosing $T>0$ sufficiently small, we establish that $\Phi$ is a contraction mapping. Consequently, the Banach fixed point theorem guarantees the existence of a unique fixed point $v\in B_T(R)$. Therefore, $u:=v+v^0$  constitutes the desired local solution within the energy space. Notably, this result also ensures uniqueness within the class $v_0+B_T(R)$.
\begin{Rem}
    {\rm By noting that $\mathbf{K}_\alpha\lesssim \frac{1}{\alpha}$ for $\alpha\geq 1$, we can conclude that the time of local existence depends solely on the size of the initial data. This insight enables us to establish global existence, as described below.}
\end{Rem}

\subsubsection{Global existence }
Let $u$ represent the unique maximal solution defined over the time interval $[0, T^*)$, where $0 < T^* \leq \infty$ denotes the maximum lifespan.  We argue by contradiction, assuming that $T^*$ is finite. Consider, for $0<t_0<T^*$, the following initial value problem: 
\begin{equation}
\label{GWP-2D}
\begin{split}
\left\{
\begin{array}{cllll}
\Box v+v&=&-|x|^b f(v),\\
(v(t_0), v_t(t_0))&=&(u(t_0),u_t(t_0)).\\
\end{array}
\right.
\end{split}
\end{equation}
According to the local theory, there exists a non-negative $\tau$ and a unique solution 
$v$ to \eqref{GWP-2D} on the time interval $[t_0,t_0+\tau]$. 
Since the time of local existence depends only on the size of the initial data and by the conservation of energy, $\tau$ is independent of $t_0$. By choosing $t_0$ sufficiently close to $T^*$ such that $T^*-t_0<\tau $, we can extend $u$ beyond the maximal time  $T^*$. This contradicts the definition of $T^*$, leading us to conclude that $T^*=\infty$ as desired.

\subsection{Proof of Theorem \ref{main3D}}
Here, we assume that $N=3$, $b>0$, and $3+b<p<4+b$. {Let $\mathbf{D}:=\left(\partial_t, \nabla\right)$, $\mathcal{E}_0:= \|\mathbf{D}u(0)\|_{L^2}$ denote the initial kinetic energy, and the pair $(q,r)$ be given by}
\begin{equation}
\label{qr}
\left(q,r\right)=\left(\frac{2}{p-b-3}, \frac{6}{4+b-p}\right).
\end{equation}

One can easily verify that $(q,r)$ is an admissible pair in the sense that it satisfies \eqref{STP1}. We will employ a standard fixed-point argument. For this purpose, we introduce the following Banach space 
$$
{\mathbf E}_T={\mathcal C}([0,T];\dot{H}^1)\cap{\mathcal
C}^1([0,T]; L^2)\cap L^q([0,T]; L^r) $$
endowed with the norm $$ \|u\|_{T}:=\|\mathbf{D}u\|_{L^\infty_T(L^2)}+\|u\|_{L^q_T(L^r)},$$
where $T>0$ and $(q,r)$ is given by \eqref{qr}.
For a positive real number $R$, we
denote by $B_T(R)$   the ball in ${\mathbf E}_T$
of radius $R$ and centered at the origin. On the ball
$B_T(R) $, we define the map $\Phi$ by
\begin{equation*}
v\longmapsto\Phi(v):=\tilde{v},
\end{equation*}
where
\begin{equation}
\label{Phivvv}
\Box\tilde{v}=-|x|^b|v|^{p-1}v, \quad
(\tilde{v}(0),\tilde{v}_t(0))=(u_0, u_1).
\end{equation}
Our aim now is to show that, if  $R$ and $T$ are suitably chosen, the map $\Phi$ is well defined from $B_T(R)$ into itself and it is a contraction. From \eqref{Int-Est} we see that $|x|^b|v|^{p-1}v$ belongs to $L^1_T(L^2)$ as long as $v\in {\mathbf E}_T$. It follows that for every $v\in {\mathbf E}_T$, the Cauchy problem \eqref{Phivvv} has a unique solution $\tilde{v}$ in ${\mathbf  E}_T$. Moreover, if $v\in B_T(R)$, then by \eqref{StEs} and \eqref{Int-Est}, we have
\begin{equation}
\label{Stab}
\begin{split}
\|\Phi(v)\|_T&\leq C\left(\mathcal{E}_0+\||x|^b|v|^{p-1}v\|_{L^1_T(L^2)}\right)\\
&\leq  C\left(\mathcal{E}_0+T^{\frac{5+b-p}{2}}\,\|v\|_T^{p}\right)\\
&\leq C\left(\mathcal{E}_0+T^{\frac{5+b-p}{2}}\,R^{p}\right).
\end{split}
\end{equation}
Choosing $R=2C \mathcal{E}_0$ and owing to \eqref{Stab}, we see that $\Phi(B_T(R))\subset B_T(R)$ as long as $T^{\frac{5+b-p}{2}}\,R^{p}\leq \mathcal{E}_0$. The latter condition reads in terms of $T$ as 
\begin{equation}
\label{T1}
T\lesssim \mathcal{E}_0^{-\kappa},
\end{equation}
where $\kappa:=\frac{2(p-1)}{5+b-p}>0$ since $1<p<5+b$.\\

It remains to  show that $\Phi$ is a contraction mapping on 
$B_T(R)$. Observe that given $v_1$ and $v_2$ in $B_T(R)$, $w:=\Phi(v_1)-\Phi(v_2)=\tilde{v}_1-\tilde{v}_2$ satisfies
\begin{equation*}
\Box w=-|x|^b (|v_1|^{p-1}v_1-|v_2|^{p-1}v_2), \quad
(w(0),w_t(0))=(0, 0).
\end{equation*}
Thanks to  Strichartz estimate \eqref{StEs}, we get
$$
\|\Phi(v_1)-\Phi(v_2)\|_T\leq C \left\||x|^b (|v_1|^{p-1}v_1-|v_2|^{p-1}v_2)\right\|_{L^1_T(L^2)}.
$$
Arguing as above and using \eqref{Int-Est} coupled with the elementary inequality $$\left||v_1|^{p-1}v_1-|v_2|^{p-1}v_2\right|\leq\, p\,|v_1-v_2|\left(|v_1|^{p-1}+|v_2|^{p-1}\right),$$
we end up with
\begin{equation}
\label{Cont}
\begin{split}
\|\Phi(v_1)-\Phi(v_2)\|_T&\leq\,C\, \left\||x|^b|v_1-v_2|\left(|v_1|^{p-1}+|v_2|^{p-1}\right)\right\|_{L^1_T(L^2)}\\
&\leq\, C\,T^{\frac{5+b-p}{2}}\, \left(\|v_1\|_{L^\infty_T(H^1)}^{p-1}+\|v_2\|_{L^\infty_T(H^1)}^{p-1}\right)\,\|v_1-v_2\|_{L^q_T(L^r)}\\
&\leq\, C\,T^{\frac{5+b-p}{2}}\,\left(\mathcal{E}_0+R\right)^{p-1} \left\|v_1-v_2\right\|_{T}.
\end{split}
\end{equation}
If $T$ satisfies \eqref{T1}, then \eqref{Cont} tell us that $\Phi$ is a contraction mapping on $B_T(R)$. This finishes the local existence part. The unconditional uniqueness can be done easily by using similar argument. Finally, the maximal solution is global in the defocusing regime due to the energy conservation and  the blow-up alternative.


\section{Linearization}
\label{S4}
This section is devoted to  the proof of Theorem \ref{at2}. Hereafter, $T$ stands for a fixed positive time, and $(\varphi_n,\psi_n)$ a sequence in $H^1\times L^2$ satisfying \eqref{eq a19} and \eqref{eq a20}.
We denote by $(u_n)$, (respectively $(v_n)$) the sequence of finite energy solutions to \eqref{eq a1}, (respectively \eqref{eq a210}) such that 
$$(u_n(0),\partial_tu_n(0))= (v_n(0),\partial_tv_n(0))=(\varphi_n,\psi_n).$$
\subsection{The 2D case}
Recall that in this context, we have $f(u) = -\left({\rm e}^u-1-u\right)$, leading to $F(u) = -\left({\rm e}^u-1-u-\frac{u^2}{2}\right)$. An essential component of the proof hinges on the following lemma.
\begin{Lem}
\label{Key2D}
Up to extraction, we have 
\begin{equation*}
    u_n \rightharpoonup 0 \quad \mbox{in}\quad H^1 (]0,T[\times \R^2),
\end{equation*}
   and 
   \begin{equation*}
    u_n \longrightarrow 0 \quad \mbox{in}\quad L^2(]0,T[\times \R^2).
\end{equation*}
\end{Lem}

\begin{proof}[Proof of Lemma \ref{Key2D}]
    Let 
    $$
    M:=\sup_n \left(\|\varphi_n\|_{H^1}^2+\|\psi_n\|_{L^2}^2\right)\quad\mbox{and}\quad 0<\alpha <\frac{4\pi}{M}.$$
    By the energy conservation  and Lemma \ref{L-1} with $\kappa=3$, we have
   \begin{eqnarray*}
 E(u_n,t)&=&\frac{1}{2}(\|\varphi_n\|_{H^1}^2+\|\psi_n\|_{L^2}^2)+\int_{\R^2}|x|^b F(\varphi_n)dx\\
 &\leq& \frac{M}{2}+CM^b\int_{\R^2}\left(|\varphi_n|^{3-2b}+\left({\rm e}^{\alpha \varphi_n^2}-1\right)\right)dx\\
 &\leq&\frac{M}{2}+C M^{3-b}+C M^b.
  \end{eqnarray*}
  Therefore, once more through energy conservation, we obtain for $0 \leq t \leq T$,
  \begin{equation}
      \label{u-n-bound}
      \|\nabla u_n(t)\|_{L^2}^2+\|u_n(t)\|_{L^2}^2\leq 2E(u_n,t) \leq C_M:=M+2C M^{3-b}+2C M^b.
  \end{equation}
It follows that
\begin{eqnarray*}
\|u_{n}\|^{2}_{H^{1}(]0,T[\times{\mathbb{R}}^{2})}&=&\int_{0}^{T}\Big(\|u_{n}(t)\|_{H^1(\mathbb{R}^2)}^{2}+\|\partial_{t}u_{n}(t)\|^{2}_{L^2(\mathbb{R}^2)}\Big)dt\\
&\leq&2TE(u_n,t)\\
&\leq& 2T \left(\frac{M}{2}+C M^{3-b}+C M^b\right).
\end{eqnarray*}
Hence, after extraction, the sequence $(u_n)$ possesses a weak limit $u$ in $H^1(]0,T[\times \R^2)$. To finalize the proof, we will show that $u$ satisfies the following initial value problem
\begin{equation*}
\Box u + |x|^b f(u) = 0, \quad
(u(0, \cdot), \partial_t u(0, \cdot)) = (0, 0),
\end{equation*}
which ultimately leads to $u = 0$ through the application of a standard uniqueness argument. 

Although we deal here with a spatial growing nonlinearity, the remainder of the proof uses Lemma \ref{L-1} and mimics the same steps performed in \cite[Lemma 4.3]{OMTS}, which specifically addresses the case $b = 0$. For brevity, we omit the detailed steps. Consequently, the proof of Lemma \ref{Key2D} is now complete.
    \end{proof}

We return to the proof of the 2D part in Theorem \ref{at2}. Setting $w_n=u_n-v_n$, we have 
\begin{equation*}
\Box w_n+w_n=-|x|^bf(u_n), \quad
\left(w_n(0,.),\partial_tw_n(0,.)\right)=(0,0).
\end{equation*}
By applying the energy estimate to $w_n$ over the interval $[0, T]$, and then utilizing Lemma \ref{Tech-2D} with $0 < \alpha < \frac{2\pi}{C_M}$ (where $C_M$ is defined in \eqref{u-n-bound}), along with \eqref{u-n-bound} and \eqref{WMT1} (with $\beta=0$), we derive for $t \in [0, T]$
\begin{equation}
    \label{Linear-2D}
    \begin{split}
        E_0(w_n,t)&\lesssim \||x|^bf(u_n)\|_{L^1_T(L^2)}^2\\
        &\lesssim T\, \||x|^bf(u_n)\|_{L^2([0,T]\times\R^2)}^2\\
        &\lesssim \|u_n\|_{L^{4(1-b)}}^{4(1-b)}+\int_0^T\,\int_{\R^2}\, \left({\rm e}^{2\alpha\,u_n^2}-1\right)dx\,dt\\
        &\lesssim \|u_n\|_{L^{4(1-b)}([0,T]\times\R^2)}^{4(1-b)}+\|u_n\|_{L^2([0,T]\times\R^2)}^2.
    \end{split}
\end{equation}
To conclude the proof, we observe that the sequence $(u_n)$ converges to $0$ in $L^2([0,T]\times\R^2)$ and is bounded in $H^1((0,T)\times\R^2)$. By interpolation, this convergence extends to $0$ in any Lebesgue space $L^q([0,T]\times\R^2)$ for $2 \leq q < 6$. Since $2 \leq 4(1-b) < 6$ due to $b \leq \frac{1}{2}$, we infer that the right-hand side in \eqref{Linear-2D} tends to zero as $n \to \infty$. This concludes the proof of the 2D component of Theorem \ref{at2}.
\subsection{The 3D case}
We  move now to the 3D part of Theorem \ref{at2}. In the following, we have $N=3$, $b>0$, $1+b<p<4+b$, and $T>0$.

As will become clear later on, the proof borrows some arguments from \cite[Theorem 2.3]{P.GR}. By defining $w_n=u_n-v_n$, we obtain 
\begin{equation*}
\Box w_n=-|x|^b|u_n|^{p-1}u_n, \quad
\left(w_n(0,.),\partial_tw_n(0,.)\right)=(0,0).
\end{equation*}
Applying energy estimate yields
\begin{equation*}
    \sup_{0\leq t\leq T}E_0(w_n,t)\lesssim \||x|^b|u_n|^{p-1}u_n\|_{L^1_T(L^2)}^2,
\end{equation*}
where $E_0(w,t)$ is defined by \eqref{E-0} with $\bm=0$. By \eqref{eq a16} and
the finite propagation speed, the solution $u_n$ is supported in $\displaystyle\Big\{(t,x);\; 0\leq t\leq T,\;  |x|\leq R_0+t\Big\}$. 

By the conservation of energy, \eqref{SGNI} and \eqref{eq a19}, we have
\begin{equation*}
    \begin{split}
        E(u_n,t)&=\frac{1}{2}\left(\|\nabla \varphi_n\|_{L^2}^2+\|\psi_n\|_{L^2}^2\right)+\frac{1}{p+1}\int_{\R^3}\,|x|^b |\varphi_n(x)|^{p+1}\,dx\\
        &\leq \frac{M}{2}+\frac{K_{opt}}{p+1} M^{\frac{p+1}{2}},
    \end{split}
\end{equation*}
where $M:=\displaystyle\sup_{n\in\N}\left(\|\nabla \varphi_n\|_{L^2}^2+\|\varphi_n\|_{L^2}^2+\|\psi_n\|_{L^2}^2\right)<\infty$. It follows that, for all $n\in\mathbb{N}$ and $t\in [0,T]$,
\begin{equation}
\label{u-n-bound-H1}
\|u_n(t)\|_{H^1}\leq M_T,
\end{equation}
where $M_T$ is a positive constant. Now, by utilizing the Strichartz estimate \eqref{StEs} for the admissible pair $(q,r)$ as defined in \eqref{qr}, along with Lemma \ref{Interp-Est}, we obtain
\begin{equation*}
 \|u_n\|_{L^q_T(L^r)}\lesssim \sqrt{M}+T^{\frac{5+b-p}{2}}\,\|u_n\|_{L^\infty_T(H^1)}^{p-1}\|u_n\|_{L^q_T(L^r)}.
\end{equation*}
Therefore, utilizing \eqref{u-n-bound-H1}, we deduce that $(u_n)$ is bounded in $L^q_T(L^r)$ for small $T>0$. By repeating this procedure iteratively, we establish the boundedness of $(u_n)$ in $L^q_T(L^r)$ for any $T>0$.
Now, the remaining part of the proof can be conducted analogously to the proof of \cite[Theorem 2.3]{P.GR}, incorporating Lemma \ref{armf}. Consequently, for the sake of brevity, the details are omitted. Thus, we thereby conclude the proof of Theorem \ref{at2} concerning linearization.

\end{document}